\documentclass[12pt,reqno]{amsart}
\usepackage{amsmath, amsthm, amsopn, amssymb, microtype, bm}
\usepackage{mathrsfs} 
\usepackage{mathscinet}
\usepackage{bbm}
\usepackage{enumerate,tikz, etoolbox, intcalc, geometry, caption, subcaption, afterpage,quiver}
\usepackage[english]{babel}
\usepackage{enumitem}

\usepackage[colorlinks=true,urlcolor=blue,pdfborder={0 0 0}]{hyperref}
\hypersetup{linkcolor=[rgb]{0,0,0.6}}
\hypersetup{citecolor=[rgb]{0,0.6,0}}
\usepackage[nameinlink]{cleveref}

\theoremstyle{definition}
\newtheorem{definition}{Definition}[section]
\theoremstyle{remark}
\newtheorem{example}[definition]{Example}

\newtheorem{remark}[definition]{Remark}

\theoremstyle{plain}

\newtheorem{theorem}[definition]{Theorem}
\newtheorem*{theorem*}{Theorem}

\newtheorem{lemma}[definition]{Lemma}

\def\ol{\overline}
\def\ov{\overline}

\newcommand{\N}{\mathbb N}

\newcommand{\be}{\begin{equation}}
\newcommand{\ee}{\end{equation}}
\newcommand{\ba}{\begin{aligned}}
\newcommand{\ea}{\end{aligned}}
\newcommand{\mc}{\mathcal}
\newcommand{\ignore}[1]{}

\numberwithin{equation}{section}

\title{Generalized Bratteli diagrams without probability tail invariant measures}

\author[Barsky]{Andrew Barsky}
\address{Department of Mathematics, University of Iowa, Iowa City, IA 52242-1419, USA}
\email{andrew.barsky@doctoral.uj.edu.pl}

\author[Bezuglyi]{Sergey Bezuglyi}
\address{Department of Mathematics, University of Iowa, Iowa City, IA 52242-1419, USA}
\email{sergii-bezuglyi@uiowa.edu}

\subjclass[2020]{37A05, 37B05, 37A40, 54H05, 05C60}

\keywords{Generalized Bratteli diagram, tail invariant measures, Vershik map}

\date{}

\begin{document}

\begin{abstract}
The paper focuses on the following problem: given a generalized Bratteli diagram $B$, determine conditions under which the tail equivalence relation $\mathcal R$ does not admit a tail invariant probability measure on the path space $X_B$. We give several sufficient conditions, formulated in terms of the incidence matrices, that guarantee the nonexistence of such a measure.
 We also discuss a new spectral method for finding tail invariant measures for a class of stationary generalized Bratteli diagrams. This method allowed us
to answer affirmatively an open problem formulated in \cite{BezuglyiJorgensenKarpelSanadhya2026}. 
\end{abstract}

\maketitle
\tableofcontents

\section{Introduction}\label{sect Intro} 

The paper is devoted to the problem of the existence of invariant probability measures for Borel automorphisms of a standard Borel space.
 In \cite{BezuglyiDooleyKwiatkowski2006}, it was proved that every aperiodic Borel automorphism is isomorphic to a Bratteli-Vershik transformation defined on the path space of a generalized Bratteli diagram (see Section
\ref{sect BD} for definitions). 
In other words, we can use generalized Bratteli diagrams to construct isomorphic models of Borel automorphisms. For standard Bratteli diagrams (with a finite number of vertices at each level), invariant probability measures always exist, and their description is closely tied to finite-dimensional nonnegative matrix theory.
For generalized Bratteli diagrams (with countable sets of vertices at each level), this picture changes substantially: the incidence matrices are infinite, the path space need not be locally compact, and invariant measures, when they exist, need not be finite. This raises a basic question: how can one detect directly from the incidence matrices that no invariant probability measure exists?

The existence of invariant probability measures for Borel automorphisms and countable Borel equivalence relations is a classical problem in Borel dynamics; see, for example, \cite{Nadkarni1991, Dougherty_Jackson_Kechris1994, KechrisMiller2004, Kechris2024}. In particular, Nadkarni's theorem characterizes the nonexistence of an invariant probability measure for an aperiodic Borel automorphism in terms of compressibility. The purpose of this paper is to study this problem for Borel dynamical systems represented by generalized Bratteli diagrams and to obtain explicit criteria for nonexistence in terms of their incidence matrices.

\textit{Bratteli diagrams} provide a versatile framework for representing dynamical systems through graded graphs and their incidence matrices. In the classical setting, ordered Bratteli diagrams model homeomorphisms of Cantor spaces via Vershik maps, connecting topological dynamics, orbit equivalence, and dimension groups \cite{HermanPutnamSkau1992, Durand2010, BezuglyiKarpel2016}. Generalized Bratteli diagrams, whose vertex sets are countably infinite at every level, extend this framework to non-compact zero-dimensional Polish spaces and to the study of \textit{Borel automorphisms}.
Their path spaces carry a natural tail equivalence relation, which is a hyperfinite countable Borel equivalence relation and, under suitable order assumptions, may be generated by a Borel Vershik map; see   \cite{HermanPutnamSkau1992, GiordanoPutnamSkau1995, Medynets_2006, BezuglyiKwiatkowskiYassawi2014, BezuglyiJorgensenKarpelSanadhya2026} for further discussion of Vershik maps.

Invariant measures are central to the ergodic theory of Bratteli-Vershik systems and countable Borel equivalence relations. For a Bratteli diagram, invariance under the tail equivalence relation means that cylinder sets determined by finite paths with the same terminal vertex have equal measure. Such measures are encoded by sequences of nonnegative vectors 
$p^{(n)}$ (infinite-dimensional for generalized Bratteli diagrams) 
satisfying compatibility relations $A_n p^{(n+1)} = p^{(n)}$, determined by the transposes $A_n$ of the incidence matrices $F_n$. In the stationary finite-rank setting, this correspondence leads to a well-developed theory based on nonnegative matrices and their eigenvectors \cite{BezuglyiKwiatkowskiMedynetsSolomyak2010, BezuglyiKarpel2016, BezuglyiJorgensen2022}. For generalized Bratteli diagrams, however, the incidence matrices are infinite, the path space is generally non-locally compact, and the existence of a probability tail invariant measure is no longer automatic. Even when nonzero invariant measures exist, they may be infinite rather than finite.

This paper addresses the following problem: given a generalized Bratteli diagram $B$, determine verifiable conditions under which its path space $X_B$ admits no invariant probability measure under the tail equivalence relation. Equivalently, when the relation is generated by a Borel Vershik map, the problem asks when the associated Borel automorphism has no invariant probability measure of this type. Thus, any nonzero tail invariant measure, if it exists, must be infinite. The aim is to formulate nonexistence criteria directly in terms of the incidence matrices of the diagram and the resulting numbers of finite paths. 

A related notion in Borel dynamics is that of \textit{compressibility}. A countable Borel equivalence relation is called compressible when, in a Borel sense, its space can be embedded into a proper part of itself along equivalence classes. Compressibility is therefore an appropriate dynamical expression of the escape-of-mass phenomenon: the system can be shifted into a smaller Borel subset without leaving the equivalence classes. A fundamental principle in the theory of countable Borel equivalence relations asserts that compressibility is incompatible with the existence of an invariant probability measure \cite{Nadkarni1991, Nadkarni1995, Kechris2024}. In the setting of generalized Bratteli diagrams, the tail relation is a hyperfinite aperiodic countable Borel equivalence relation, and when an appropriate Vershik map exists, it can be viewed as the orbit relation of a Borel automorphism. Thus, the nonexistence of a probability tail invariant measure may also be interpreted as evidence of a compressible Borel dynamical structure. The incidence-matrix criteria developed in the paper provide concrete combinatorial conditions for detecting this phenomenon.

Let $h_{v,w}^{(n)}$ denote the number of finite paths from a vertex $w$ at level zero to a vertex $v$ at level $n$, and let $H_v^{(n)}$ denote the total number of finite paths terminating at $v$. The ratio
$ \dfrac{h_{v,w}^{(n)}}{H_v^{(n)}} $
is the proportion of finite paths that end in $v$ and originate in $w$. More generally, sums of these ratios describe the proportion of paths originating in a prescribed finite collection of vertices. These quantities provide a natural way to detect the escape of mass across the countably infinite levels of a generalized Bratteli diagram. 
The decay of the quantities $\dfrac{h_{v,w}^{(n)}}{H_v^{(n)}}$, uniformly in suitable vertices $v,w$, can therefore be viewed as an indication of the escape of mass.

Our \textit{main results} are divided into two groups.
The first group of results establishes sufficient conditions for the nonexistence of invariant measures in the probability tail based on the asymptotic behavior of the path ratios $\frac{h_{v,w}^{(n)}}{H_v^{(n)}}$. In particular, if the contribution of every fixed initial vertex becomes uniformly negligible among paths terminating at level $n$, then no probability tail invariant measure can exist. 

\begin{theorem} Let $B = B(F_n)$ be a generalized Bratteli diagram defined by a sequence of incidence matrices $(F_n)$.
Suppose that for all $w\in V_{0}$,
\begin{equation*}    
\liminf_{n\to\infty}\left(\sup_{v\in V_{n}}\frac{h_{v,w}^{(n)}}{H_{v}^{(n)}}\right)=0
\end{equation*}
Then $B$ does not admit a probability tail invariant measure. Moreover, 
this condition is sufficient but not necessary. 
\end{theorem}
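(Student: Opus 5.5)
The plan is to argue by contradiction. Suppose $\mu$ is a tail invariant probability measure on $X_B$. For $v \in V_n$, tail invariance gives that all cylinder sets $[\bar e]$ determined by finite paths $\bar e$ from $V_0$ to $v$ have the same measure; call this common value $p^{(n)}_v$. The vectors $p^{(n)}$ then satisfy the compatibility relations $A_n p^{(n+1)} = p^{(n)}$. Since these cylinders, for fixed $n$, partition $X_B$, we have
\[
\sum_{v\in V_n} H_v^{(n)} p_v^{(n)} = \mu(X_B) = 1 .
\]
The key identity expresses the measure of the level-zero cylinder $X_w=\{x\in X_B : s(x)=w\}$, for $w\in V_0$, at every level $n$:
\[
\mu(X_w) = \sum_{v\in V_n} h_{v,w}^{(n)}\, p_v^{(n)} .
\]
This holds because $X_w$ is the disjoint union of the cylinders of finite paths from $w$ to vertices of $V_n$, and there are exactly $h^{(n)}_{v,w}$ of them ending at $v$, each of measure $p^{(n)}_v$.

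Next I would bound this sum. Writing $h_{v,w}^{(n)} = \frac{h_{v,w}^{(n)}}{H_v^{(n)}}\, H_v^{(n)}$ gives
\[
\mu(X_w) \le \Bigl(\sup_{v\in V_n}\frac{h_{v,w}^{(n)}}{H_v^{(n)}}\Bigr)\sum_{v\in V_n} H_v^{(n)}p_v^{(n)} = \sup_{v\in V_n}\frac{h_{v,w}^{(n)}}{H_v^{(n)}} .
\]
Since the left side does not depend on $n$, we may take $\liminf_{n\to\infty}$, and the hypothesis gives $\mu(X_w)=0$ for every $w\in V_0$. Because $V_0$ is countable and $X_B=\bigsqcup_{w\in V_0}X_w$, it follows that $\mu(X_B)=0$, contradicting $\mu(X_B)=1$. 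One bookkeeping point: vertices $v$ with $H_v^{(n)}=0$ contribute nothing, so the ratio only needs to be taken where it is defined, or one can assume every vertex is reachable.

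For the claim that the condition is \emph{not necessary}, I would construct an explicit diagram. The idea is to take a diagram whose path space splits into two parts.
\begin{itemize}
\item One part is a single "spine" vertex $w_0$ at every level with a single edge between consecutive levels. This keeps $h_{w_0,w_0}^{(n)}/H_{w_0}^{(n)}=1$ for all $n$, so the hypothesis fails for $w_0$.
\item The other part is a component carrying no finite invariant measure.
\end{itemize}
The catch is that the spine path alone supports a point mass, which is invariant if its tail class is a singleton. So the spine must be made nontrivial: for instance, let vertex $n$ at level $n$ receive edges from both $w_0$ and a component with escaping mass. The measure of the spine must then be forced to zero using the compatibility equations.

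I expect this counterexample to be the main obstacle. A natural first attempt is a diagram with $V_n=\mathbb{N}$, where vertex $0$ receives one edge from vertex $0$ and many edges from vertex $1$, while the rest is a transient "shift to the right" structure, as in a birth--death type chain. For such a diagram I would verify directly from $A_n p^{(n+1)}=p^{(n)}$ that any solution with $\sum_v H_v^{(n)}p_v^{(n)}<\infty$ must vanish. At the same time, $\sup_v h_{v,0}^{(n)}/H_v^{(n)}$ stays bounded below because of a vertex whose only predecessors are near $0$.
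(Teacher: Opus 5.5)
Your proof of the sufficiency part is correct and is essentially the paper's first proof: the normalization $\sum_{v}H^{(n)}_vp^{(n)}_v=1$, the decomposition $\mu([w])=\sum_{v}h^{(n)}_{v,w}p^{(n)}_v$, the supremum bound, $\liminf$, and countable additivity over $V_0$. Your worry about $H^{(n)}_v=0$ is vacuous, since $r^{-1}(v)\neq\emptyset$ forces $H^{(n)}_v\ge1$.

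The gap is the ``not necessary'' clause. You give no verified example, and the route you outline cannot work. In a birth--death or shift-type diagram, and in your spine construction, every vertex has finitely many outgoing edges. For such diagrams the hypothesis is actually \emph{necessary}. To see this, suppose it fails at $w_0$. Choose $\delta>0$ and, for infinitely many $n$, vertices $v_n\in V_n$ with $h^{(n)}_{v_n,w_0}\ge\delta H^{(n)}_{v_n}$. For $u\in V_k$, $k\le n$, put $\rho^{(k),n}_u=h^{(n,k)}_{v_n,u}/H^{(n)}_{v_n}$. Splitting paths by their first edge gives $\rho^{(k),n}=A_k\rho^{(k+1),n}$ for $k<n$. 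Also $0\le\rho^{(k),n}_u\le1$, $\sum_{w\in V_0}\rho^{(0),n}_w=1$, and $\rho^{(0),n}_{w_0}\ge\delta$. Pass to an entrywise convergent diagonal subsequence. Because each row of $A_k$ has finitely many nonzero entries, the relation $A_k\rho^{(k+1)}=\rho^{(k)}$ survives the limit, and Fatou gives $\delta\le\sum_{w}\rho^{(0)}_w\le1$. Theorem \ref{thm inv measures} then yields a nonzero finite tail invariant measure.

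Your sketch shows this failure concretely. If vertex $i+1$ at level $n+1$ receives only from $i$, then the vertex $n\in V_n$ that keeps the ratio large has the diagonal $0\to1\to\cdots\to n$ as its only incoming path. The infinite diagonal path is then a one-point tail class, and its Dirac mass is a tail invariant probability measure.

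The paper's own witness, Example \ref{ex not nessas}, does not rescue you either. From $F_nH^{(n)}=H^{(n+1)}$ one gets $H^{(n)}_1=3^n+n3^{n-1}$, not $3^n+3^{n-1}$, so $h^{(n)}_{1,1}/H^{(n)}_1=3/(n+3)\to0$. Since that diagram has finite out-degrees, the argument above shows the hypothesis in fact holds there.

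What you need is a vertex of infinite out-degree, which the definition permits because only $|r^{-1}(v)|<\infty$ is required. For example:
\begin{enumerate}[label=(\roman*)]
\item Start from the diagram $B_1$ of Example \ref{ex triangle} on $\mathbb{Z}$ and add a vertex $\ast\in V_0$.
\item For each $m\ge1$, add a private chain $c_{m,1}\in V_1,\dots,c_{m,m}\in V_m$.
\item Put one edge $\ast\to c_{m,1}$, one edge $c_{m,n}\to c_{m,n+1}$ for $n<m$, and one edge from $c_{m,m}$ to $0\in\mathbb{Z}\subset V_{m+1}$.
\end{enumerate}
Then $h^{(n)}_{c_{n,n},\ast}=H^{(n)}_{c_{n,n}}=1$ for every $n$, so the hypothesis fails at $\ast$, and the tail relation stays aperiodic. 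Suppose $\mu$ were a tail invariant probability measure. Since $\mathbb{Z}$-vertices send edges only to $\mathbb{Z}$-vertices, $(p^{(n)}|_{\mathbb{Z}})$ would satisfy the recursion of $B_1$ with total mass $\sum_{u\in\mathbb{Z}}p^{(0)}_u\le1$. Normalizing and using Example \ref{ex triangle}, it must vanish. Then $p^{(n)}_{c_{m,n}}=p^{(m+1)}_0=0$ and $p^{(0)}_\ast=\sum_{m}p^{(1)}_{c_{m,1}}=0$, a contradiction.
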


This principle is also developed for finite subsets of vertices at an arbitrary fixed level. The proofs combine the normalization identity for a probability tail invariant measure with estimates derived from finite-path counts. The resulting criteria apply to both stationary and nonstationary diagrams and are expressed entirely through products of incidence matrices. We also consider balanced path growth and a bounded-distortion condition that allows asymptotic information along selected vertices to be promoted to estimates uniform across an entire level.
 These criteria reflect a common mechanism: the failure of probability invariance can be viewed as an \textit{escape of mass} encoded combinatorial by the incidence matrices.
\\

The second part of the paper considers a class of stationary generalized Bratteli diagrams whose incidence matrix $A$ satisfies the following conditions:
\begin{itemize}
    \item $A$ is a nonnegative symmetric incidence operator;
    \item $A$ has a complete orthonormal family of eigenvectors in the space $\ell^2(V_0)$;
    \item  the tower heights have a uniform lower bound that dominates every fixed exponential rate.
\end{itemize}

An example of such a matrix is 
$$
F =
\begin{pmatrix}
2&1&0&0&\cdots\\
1&3&1&0&\cdots\\
0&1&4&1&\cdots\\
0&0&1&5&\ddots\\
\vdots&\vdots&\vdots&\ddots&\ddots 
\end{pmatrix}.
$$
This diagram consists of a countable family of odometer subdiagrams with increasing multiplicities, coupled by single edges between neighboring indices. The question of the existence of a probability tail invariant measure for this diagram was posed in \cite{BezuglyiJorgensenKarpelSanadhya2026}. We prove here that no such measure exists. 

Analyzing the properties of the matrix $F$, we treat it as an unbounded Jacobi-type operator on $\ell^2(\N)$. We establish
that this operator is self-adjoint and has compact resolvent; consequently, it has purely
discrete spectrum and an orthonormal basis of eigenvectors. At the same time, a direct
combinatorial argument gives a uniform superexponential lower bound for the heights of the
Kakutani-Rokhlin towers. If a probability tail invariant measure existed, its defining vectors
would satisfy the incidence-matrix recursion. Pairing this recursion with the eigenvectors of
the operator shows that every Fourier coefficient of the initial measure vector must vanish:
exponential growth along each fixed eigenvector is dominated by the uniform growth of the
tower heights. Completeness of the eigenvectors then forces the initial vector to be zero,
contradicting probability normalization.

The proof of this result introduces a new method different from all the methods known before. We call it the \textit{spectral method}. 
Let us recall what main methods have been used to study tail invariant measures. If $B=B(F)$ is a stationary Bratteli diagram, then Perron-Frobenius theory (for finite and infinite nonnegative matrices $F$) can be used to find explicit formulas for the values of tail invariant measures on cylinder sets in terms of the Perron eigenvalue and the corresponding nonnegative eigenvector. 
This method has been used in 
\cite{BezuglyiKwiatkowskiMedynetsSolomyak2010, BezuglyiKarpelKwiatkowski2019, BezuglyiKarpel_2020, BezuglyiJorgensenKarpelSanadhya2026} and other papers. The second method consists of finding a subdiagram of $B$ that supports a probability tail invariant measure and extending this measure by tail equivalence to the path space of $B$. The main difficulty in the realization of this construction is to determine when the extended measure is finite; see also the papers 
\cite{BezuglyiKwiatkowskiMedynetsSolomyak2013,  AdamskaBezuglyiKarpelKwiatkowski2017, BezuglyiKarpel2016, BezuglyiJorgensenKarpelSanadhya2026}. More recently, a method based on the computation of the inverse limit of closed convex subsets in an infinite-dimensional space was developed in \cite{BezuglyiKarpelKwiatkowskiWata2024}.

 The \textit{spectral method}
is the principal new idea used in Section \ref{sect 4}. It illustrates how operator-theoretic properties of an infinite incidence matrix can be combined with combinatorial growth estimates to establish nonexistence of invariant probability measures. Unlike the ratio criteria, the argument does not require uniform control of the contribution of each initial vertex. It therefore offers a complementary approach for stationary generalized Bratteli diagrams whose unbounded incidence matrices admit a useful self-adjoint realization.

\begin{theorem}
Let $B = B(F)$ be a stationary generalized Bratteli diagram, with $A=F^T$ realized as a self-adjoint operator acting on $\ell^2(V_0)$. Assume that $A$ has an orthonormal basis $\{\xi^{(k)}\}$ of eigenvectors with finite eigenvalues $\lambda_k$. Define $M_n := \inf_{v\in V_n} H_v^{(n)}$. If 
for every fixed $k$,
$$
\frac{\lambda_k^n}{M_n} \to 0, \quad n\to \infty, 
$$
then $B$ admits no tail invariant probability measure.
\end{theorem}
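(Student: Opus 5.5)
Suppose, to reach a contradiction, that $\mu$ is a tail invariant probability measure on $X_B$. We encode it in the usual way by the vectors $p^{(n)} = (p^{(n)}_v)_{v\in V_n}$, where $p^{(n)}_v$ is the common measure of the cylinder sets $[e]$ determined by finite paths $e$ from $V_0$ to $v$. For a stationary diagram these vectors satisfy $A p^{(n+1)} = p^{(n)}$, and iterating gives $p^{(0)} = A^n p^{(n)}$ for every $n$. Since there are $H^{(n)}_v$ cylinders ending at $v$ and they partition $X_B$, we also have the normalization
$$\sum_{v\in V_n} H_v^{(n)} p_v^{(n)} = 1 .$$
From this we get two estimates. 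First, $\sum_v p_v^{(n)} \le 1/M_n$, so $\|p^{(n)}\|_{\ell^1}\le 1/M_n$. Second, since $\|\cdot\|_{\ell^2}\le\|\cdot\|_{\ell^1}$, also $\|p^{(n)}\|_{\ell^2}\le 1/M_n$. In particular every $p^{(n)}$ lies in $\ell^2(V_0)$; here we identify all $V_n$ with $V_0$ by stationarity. Moreover $p^{(0)}\neq 0$, because $\sum_w p^{(0)}_w = 1$.

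Next we compute the Fourier coefficients of $p^{(0)}$ in the eigenbasis. Fix $k$. Using self-adjointness of $A$ and $A\xi^{(k)}=\lambda_k\xi^{(k)}$, I will show
$$\langle p^{(0)},\xi^{(k)}\rangle = \langle A^n p^{(n)},\xi^{(k)}\rangle = \langle p^{(n)}, A^n\xi^{(k)}\rangle = \lambda_k^n\langle p^{(n)},\xi^{(k)}\rangle .$$
By Cauchy--Schwarz this yields $|\langle p^{(0)},\xi^{(k)}\rangle| \le |\lambda_k|^n / M_n$, which tends to $0$ by hypothesis. I read the hypothesis with absolute values; otherwise I would note that the eigenvalues are nonnegative in the intended setting. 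Since the left-hand side does not depend on $n$, every coefficient $\langle p^{(0)},\xi^{(k)}\rangle$ vanishes. Completeness of $\{\xi^{(k)}\}$ then forces $p^{(0)}=0$, which contradicts $p^{(0)}\neq 0$.

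The main obstacle is justifying the middle step. There $A$ is an unbounded operator and the identity $A^n p^{(n)}=p^{(0)}$ holds entrywise as a matrix product, so it is not automatically an operator identity. I would avoid domain issues by never applying the operator to $p^{(n)}$. Instead, write $\langle A^n p^{(n)},\xi^{(k)}\rangle$ as the double sum $\sum_{v,w}(A^n)_{w v}\,p^{(n)}_v\,\xi^{(k)}_w$ and interchange the order of summation. Because the eigenvector equation holds componentwise, $\sum_w (A^n)_{wv}\xi^{(k)}_w = \lambda_k^n \xi^{(k)}_v$, where $A$ is symmetric as a matrix. The interchange is the delicate point. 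I would try to justify it by absolute convergence: bound $\sum_{v,w}(A^n)_{wv}p^{(n)}_v|\xi^{(k)}_w|$ using that $(A^n)_{wv}=h^{(n)}_{v,w}$, so the column sums $\sum_w (A^n)_{wv}$ equal $H^{(n)}_v$, together with $|\xi^{(k)}_w|\le 1$. This gives the bound $\sum_v H^{(n)}_v p^{(n)}_v = 1<\infty$, so Fubini applies. If the index convention makes rows rather than columns the path counts, I would adjust by using $F$ versus $A$ accordingly. Everything else is a routine estimate.
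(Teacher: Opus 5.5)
Your proposal is correct and follows the paper's argument. It uses the same Fubini justification via the column sums $H^{(n)}_v$ and the bounded entries $|\xi^{(k)}_w|$, the same identity $\langle p^{(0)},\xi^{(k)}\rangle=\lambda_k^n\langle p^{(n)},\xi^{(k)}\rangle$, a bound by $1/M_n$, and completeness. The only difference is cosmetic: you bound $|\langle p^{(n)},\xi^{(k)}\rangle|$ by Cauchy--Schwarz with $\|p^{(n)}\|_{\ell^2}\le\|p^{(n)}\|_{\ell^1}\le 1/M_n$, where the paper uses $\|\xi^{(k)}\|_\infty\|p^{(n)}\|_{\ell^1}$. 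Your caveat about absolute values is unnecessary, since $M_n>0$ gives $|\lambda_k^n/M_n|=|\lambda_k|^n/M_n$.
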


The paper is organized as follows. Section \ref{sect BD} recalls generalized Bratteli diagrams, their path spaces, tail equivalence relations, Vershik maps, and the vector description of tail invariant measures. Section \ref{sect Ratio test} develops sufficient nonexistence conditions based on ratios of finite-path counts, finite subsets of vertices, balanced growth, and bounded distortion. Section \ref{sect 4} studies a class of stationary Bratteli diagrams. We focus first on a particular case represented by the matrix $F$, see above. Then we extend this method to more general classes of matrices. 
After establishing the spectral properties of its incidence operator and deriving uniform lower bounds for tower heights, we prove that its path space admits no probability tail invariant measure, thereby resolving the problem raised in \cite{BezuglyiJorgensenKarpelSanadhya2026}. 

\section{Basics on Bratteli diagrams}\label{sect BD}

In this section, we introduce the terminology and basic facts that will be used throughout the paper. We begin with the definition of a Bratteli diagram. 

\begin{definition}\label{def BD}
A \textit{Bratteli diagram} is a countably infinite graph $B=(V, E)$ whose vertex and edge sets are partitioned into disjoint subsets
 $V=\bigsqcup_{n=0}^{\infty}V_{n}$, 
$E=\bigsqcup_{n=0}^{\infty}E_{n}$ satisfying the following properties:
\begin{enumerate}[label=(\roman*)]
  \item The edges in $E_n$ connect vertices in $V_n$ to vertices in $V_{n+1}$;

  \item For every edge $e\in E$, let $s(e)$ and $r(e)$ denote its \textit{source} and \textit{range}, respectively. The source and range maps satisfy $s(E_n)=V_n$, $r(E_n)=V_{n+1}$,  $n\in\N_0$.
In particular, $s^{-1}(v)\neq\emptyset$ for every $v\in V$, and $r^{-1}(v)\neq\emptyset$ for every $v\in V\setminus V_0$.

  \item Let  $\ol x = (e_i: e_i\in E_i)$ be a finite or infinite sequence of edges (it is called a \textit{path} in the diagram $B$) such 
that $s(e_i)= r(e_{i-1})$. We denote the set of all infinite
paths $\ol x$ starting at some vertex in $V_0$ by $X_B$ and call 
it the \textit{path space} of the diagram $B$. 
  \end{enumerate} 

If $|V_n| = \aleph_0$ for all $n$ and $|r^{-1}(v)| <\infty$ for every vertex $v \in V \setminus V_0$, then $B$ is called a \textit{generalized Bratteli diagram}. If $|V_n| < \infty$, then we call $B$ a \textit{standard Bratteli diagram}. Here and throughout the paper, we use $|\cdot |$ to denote the cardinality of a set.
\end{definition} 

The structure of every Bratteli diagram $B$ is completely determined by a sequence of nonnegative integer-valued \textit{incidence matrices} $F_n$. For $m > n$, denote by $E(v,w)$ the set of all 
 finite paths between the vertices $w \in V_n$ and
$v \in V_m$ (for some $w, v$ this set may be empty). Clearly $E(v,w) = E(w, v)$, so we will use both notations. 
 For $n \in \N_0$, let $f^{(n)}_{v,w} = |E(v,w)|$ for all $w \in V_n$ and
$v \in V_{n+1}$. We define the incidence matrix $F_n$, $n \in \N_0$, by setting 
\begin{equation}\label{Notation:f^i}
    F_n = (f^{(n)}_{v,w} : v \in V_{n+1}, w\in V_n),\ \   
    f^{(n)}_{v,w}  \in \N_0.
\end{equation}  
We write $B = B(F_n)$ to emphasize that a Bratteli diagram $B$ is defined by the sequence $(F_n)$ of incidence matrices.

For a finite path $\ol e = (e_0, ... , e_n)$, we write
$s(\ol e) = s(e_0)$ and $r(\ol e) = r(e_n)$. The set
$$
    [\ol e] := \{x = (x_i) \in X_B : x_0 = e_0, ..., x_n = e_n\}
$$ 
is called the \textit{cylinder set} associated with $\ol e$. The collection of cylinder sets $[\ol e]$ forms the clopen basis of the topology on the path space $X_B$. For a standard Bratteli diagram, $X_B$ is compact, and for a generalized Bratteli diagram, $X_B$ is a zero-dimensional Polish space. In general, $X_B$ is not locally compact. As usual, we will consider the Bratteli diagrams whose path spaces have no isolated points. 

In what follows, we will work with generalized Bratteli diagrams in general.  

\begin{definition}\label{Def:Tail_equiv_relation}
Two paths $x= (x_i)$ and $y=(y_i)$ in $X_B$ are called 
\textit{tail equivalent} if there exists an $n \in \mathbb{N}_0$ 
such that $x_i = y_i$ for all $i \geq n$. This notion defines a \textit{countable Borel equivalence relation (CBER)} $\mathcal R$ in the path space $X_B$, which is called the \textit{tail equivalence relation}.
We consider the generalized Bratteli diagrams for which $\mc R$ is aperiodic.
\end{definition}

The pair $(X_B, \mc R)$ is a hyperfinite aperiodic Borel dynamical system. This means that this equivalence relation is generated by an aperiodic Borel automorphism of the path space $X_B$. With some natural assumptions, such an automorphism can be realized as a Vershik map $\varphi_B$. 
To define it acting on $X_B$, we need the notion of  
an \textit{ordered Bratteli diagram} $(B, >) =(V, E, >) $. For this, we take a linear order $>$ on  the set $ r^{-1}(v) = \{ e \in E_n : r(e) = v\}$, where $v\in V_{n+1}$ and $n\in \N_0$. Then $>$ defines a partial order on the set $E$.

Let $\ol e= (e_0,e_1,..., e_i,...)$ be an infinite path and $v_{i+1} = r(e_i)$, $i\geq 0$. We say that $\ol e$ is   
\textit{maximal (respectively, minimal)} if every $e_i$ is the maximal, respectively minimal, element of $r^{-1}(v_{i+1})$. The same definition is used 
for finite maximal/minimal paths. Let $X_{max}$ and $X_{min}$ denote the sets of all maximal and minimal paths in $X_B$. It is easy to see that they are closed in $X_B$. Note that there are unique minimal and maximal 
paths in the set $E(V_0, v) = \bigcup_{w\in V_0} E(w,v)$ of all finite paths whose range is 
$v$ for each $v\in V_i,\ i > 0$, where finite paths are compared lexicographically from their terminal end. 

We recall the definition of a (Borel) Vershik map for generalized Bratteli diagrams; see \cite{HermanPutnamSkau1992, Durand2010, BezuglyiKarpel2016, BezuglyiJorgensenKarpelSanadhya2026}.

\begin{definition}\label{Def:VershikMap}  For an ordered generalized 
Bratteli diagram $B=(V,E, >)$, we define a Borel transformation 
\begin{equation}\label{eq: Vershik map}
\varphi_B : X_B \setminus X_{max} \rightarrow X_B \setminus X_{min}
\end{equation}
by the following rule. Given $x = (x_0, x_1,...)\in X_B\setminus X_{max}$, 
let $m$ be the smallest number such that $x_m$ is not maximal. Let 
$g_m$ be the successor of the edge $x_m$ in the finite set $r^{-1}(r(x_m))$.
Then we set $\varphi_B(x)= (g_0, g_1,...,g_{m-1},g_m,x_{m+1},...)$
where $(g_0, g_1,..., g_{m-1})$ is the minimal path in $E(V_0, 
s(g_{m}))$. The map $\varphi_B$ is a Borel bijection. Moreover, $\varphi_B$ is a homeomorphism from 
$X_B\setminus X_{max}$ onto $X_B\setminus X_{min}$. If  
$\varphi_B$ admits a bijective Borel extension to the 
entire path space $X_B$, then we call the Borel transformation 
$\varphi_B : X_B  \rightarrow X_B$ a \textit{Vershik map}, and 
the Borel dynamical system $(X_B,\varphi_B)$ a (generalized) 
\textit{Bratteli-Vershik} system.
\end{definition}

\begin{remark} Of course, condition $|X_{max}| = |X_{min}|$ is necessary and sufficient for the existence of a Borel Vershik map. 
It is natural to ask whether the transformation $\varphi_B$ can be a homeomorphism of the path space $X_B$ endowed with its zero-dimensional Polish topology. An affirmative answer depends on the possibility of pairing maximal and minimal infinite paths so that the extension of $\varphi_B$ is continuous; see \cite{HermanPutnamSkau1992} for more details.  
The question of whether this extension is a homeomorphism for nonsimple standard or generalized Bratteli diagrams is more delicate; see 
\cite{Medynets_2006, BezuglyiYassawi2017, BezuglyiKwiatkowskiYassawi2014, BezuglyiJorgensenKarpelSanadhya2026}.
\end{remark} 

With every Bratteli diagram $B$, we can associate a refining sequence of cylinder partitions (analogous to Kakutani-Rokhlin partitions) formed by the ``towers'' $X_w^{(n)}$ where $w\in V_n$ and $n \in \N_0$ defined by
$$
X_w^{(n)} := \{ x =(x_i) \in X_B : s(x_n) = w\}.
$$
Each finite path $\ov e = (e_0, \ldots, e_{n-1})$ with
$r(e_{n-1}) = w$ determines a ``floor'' of the tower $X_w^{(n)}$. It is identified with the set 
$$
X_w^{(n)}(\ov e) = \{x = (x_i)\in X_B : x_i = e_i,\; i = 
0,\ldots, n-1 \}.
$$
Then 
\be\label{eq X_w^{(n)}} 
X_w^{(n)} = \bigcup_{\ol e \in E(V_0, w)} X_w^{(n)}(\ov e).
\ee

Let $n \in \N$. For $v \in V_n$ and $v_0 \in
V_0$, we set $h^{(n)}_{v, v_0} = |E(v_0, v)|$ and define 
\be\label{eq H^{(n)}_v}
H^{(n)}_v = \sum_{v_0 \in V_0} h^{(n)}_{v, v_0} = |E(v, V_0)|, \ \ n \in \N,
\ee
where $H^{(0)}_v = 1$, $v\in V_0$. Then $h_{v,w}^{(n)}$ is the $(v,w)$-entry in the product of matrices $F_{n-1} \ \cdots \ F_0$, 
$H^{(n)}_v$ is the sum of all entries in the $v$-th row of the matrix $F_{n-1} \ \cdots \ F_0$, and 
 \be\label{eq2: FH=H}
F_n H^{(n)} = H^{(n+1)}, \quad n \in \N_0.
 \ee

\vskip 4mm 

In this paper, we are interested in \textit{positive tail invariant Borel measures} on the path space $X_B$ with full support; that is, every cylinder set has positive measure.
 We refer to \cite{BezuglyiKwiatkowskiMedynetsSolomyak2010, BezuglyiKarpel2016, BezuglyiJorgensenKarpelSanadhya2026} for more information on tail invariant measures.

\begin{definition}\label{def: tail inv meas} Let 
$B =(V, E)$ be 
a Bratteli diagram and $\mathcal R$ the tail equivalence relation on the path space $X_B$. A measure $\mu$ on $X_B$ is called \textit{tail invariant} if, for any cylinder sets $[\ol e]$ and $[\ol e']$ determined by finite paths beginning at level $0$ and satisfying $r(\ol e) = r(\ol e')$, we have $\mu([\ol e]) = \mu([\ol e'])$.

\end{definition}

Let $\mu$ be a tail invariant measure. It follows from \eqref{eq X_w^{(n)}}, \eqref{eq H^{(n)}_v}, and \eqref{eq2: FH=H} that for every path $\ol e$ ending in $w \in V_n$
\be\label{eq def p^(n)}
p^{(n)}_w = \frac{\mu(X_w^{(n)}) }{H^{(n)}_w} = \mu([\ol e]), 
\ee
and this quantity does not depend on $\ol e$. 
We note that the vectors $p^{(n)} = (p^{(n)}_w : w \in V_n)$ determine the tail invariant measure $\mu$ uniquely. 

The following result gives a criterion for the existence of tail-invariant measures. 

\begin{theorem}\label{thm inv measures} 
Let $B =(V, E)$ be a generalized Bratteli diagram defined
by a sequence of incidence matrices $F_n$. Let $\mu$ be a tail invariant Borel probability measure on the path space $X_B$ of $B$.
 Then the corresponding sequence of vectors 
$(p^{(n)})$ (defined as in \eqref{eq def p^(n)}) satisfies the 
property 
\be\label{eq_inv meas via A_n}
A_n p^{(n+1)} = p^{(n)}, 
\ee
where $A_n = F_n^T$ is the transpose of $F_n$.

Conversely, if a sequence of nonnegative vectors $(p^{(n)})$ satisfies 
\eqref{eq_inv meas via A_n}, then it defines  a unique  tail invariant
 measure $\mu$.
 
The theorem remains true for $\sigma$-finite measures $\nu$ 
 satisfying the property that
  $\nu([\ol e]) < \infty$ for every cylinder set $[\ol e]$.
\end{theorem}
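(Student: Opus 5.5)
We prove the two directions separately, working throughout with the towers $X_w^{(n)}$ and the floor decomposition \eqref{eq X_w^{(n)}}. For the forward direction, let $\mu$ be a tail invariant measure with finite values on cylinders; probability measures are a special case. By \eqref{eq def p^(n)}, every cylinder $[\ol e]$ with $r(\ol e)=w\in V_n$ has measure $p^{(n)}_w$. Fix such a path $\ol e$ of length $n$. The cylinder $[\ol e]$ is the disjoint union, over all edges $e\in E_n$ with $s(e)=w$, of the cylinders $[\ol e e]$ of length $n+1$. Each of these has measure $p^{(n+1)}_{r(e)}$. Grouping the edges by their range $v\in V_{n+1}$, there are exactly $f^{(n)}_{v,w}$ edges from $w$ to $v$. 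Countable additivity therefore gives
\[
p^{(n)}_w=\sum_{v\in V_{n+1}} f^{(n)}_{v,w}\,p^{(n+1)}_v=(A_np^{(n+1)})_w .
\]
This is \eqref{eq_inv meas via A_n}. The sum over $v$ may be infinite, since $s^{-1}(w)$ can be countably infinite. This causes no difficulty: $\sigma$-additivity handles it, and the left-hand side is finite by assumption.

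For the converse, let $(p^{(n)})$ be nonnegative vectors satisfying \eqref{eq_inv meas via A_n}. Define a set function on cylinders by $\mu([\ol e]):=p^{(n)}_{r(\ol e)}$ whenever $\ol e$ has length $n$, with level-$0$ cylinders $X^{(0)}_w$ receiving $p^{(0)}_w$. Cylinders form a semiring: any two are either nested or disjoint, and differences are countable disjoint unions of cylinders. The recursion gives finite additivity under one-step refinement, as in the computation above. Iterating that computation gives additivity under refinement to any deeper level.

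The key step is countable additivity on cylinders. Suppose $[\ol e]=\bigsqcup_i [\ol e_i]$ is a countable disjoint union of cylinders. We must show that $\sum_i\mu([\ol e_i])=\mu([\ol e])$.
\begin{itemize}
\item The inequality $\ge$ holds because $\mu([\ol e])$ equals a sum over a full refinement at each level. Every term of that refinement is eventually accounted for, and one uses monotone limits.
\item The inequality $\le$ follows from finite subadditivity together with nonnegativity.
\end{itemize}
The cleanest route is to avoid topological compactness, which fails because $X_B$ is not locally compact. Instead we use Kolmogorov consistency. Consider the projective system of the countable sets $E(V_0,V_n)$ of finite paths of length $n$. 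The weights $p^{(n)}_{r(\ol e)}$ are consistent under truncation precisely by \eqref{eq_inv meas via A_n}. Each weight is finite, and the total mass $\sum_{w}p^{(0)}_w$ is finite in the probability case. The Kolmogorov/Ionescu-Tulcea extension theorem for projective limits of countable discrete spaces then yields a unique measure $\mu$ on $X_B=\varprojlim E(V_0,V_n)$. Tail invariance is immediate, because the value on $[\ol e]$ depends only on $r(\ol e)$.

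Uniqueness holds because cylinders form a $\pi$-system generating the Borel $\sigma$-algebra. In addition, $\mu$ is $\sigma$-finite on it, since $X_B$ is a countable union of cylinders of finite measure. Carathéodory's uniqueness then applies.

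For the $\sigma$-finite case, the same argument works cylinder by cylinder. We restrict to each level-$0$ cylinder $X^{(0)}_w$, which has finite measure $p^{(0)}_w$. There we apply the finite extension theorem to the normalized system and sum over the countably many $w\in V_0$.

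The main obstacle is this extension step. Because $X_B$ is not compact, countable additivity does not come for free from a compactness argument. It has to come from the projective-limit structure over countable sets, where Kolmogorov's theorem applies without topological hypotheses.
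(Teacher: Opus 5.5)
The paper gives no proof of this theorem; it quotes it as a known criterion from the cited literature, so there is no in-paper argument to compare with. Your proof is correct and follows the standard route:
\begin{enumerate}[label=(\roman*)]
\item The forward direction is $\sigma$-additivity over the (possibly countably many) one-edge extensions of $[\ol e]$.
\item For the converse, you apply the Kolmogorov/Ionescu-Tulcea extension over the countable discrete sets $E(V_0,V_n)$. Consistency is exactly $A_np^{(n+1)}=p^{(n)}$, and truncation is onto because $s^{-1}(v)\neq\emptyset$.
\item Uniqueness follows from the $\pi$--$\lambda$ theorem.
\item The $\sigma$-finite case reduces to the level-$0$ cylinders $[w]$, each of finite mass $p^{(0)}_w$. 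If $p^{(0)}_w=0$, the recursion and nonnegativity force every cylinder starting at $w$ to have weight $0$, so normalization is not needed there.
\end{enumerate}

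Delete the bullet-point sketch of countable additivity that precedes the Kolmogorov step; as written it is not an argument.
\begin{itemize}
\item The inequality $\sum_i\mu([\ol e_i])\le\mu([\ol e])$ comes from finite additivity and monotonicity, not from finite subadditivity.
\item The inequality $\sum_i\mu([\ol e_i])\ge\mu([\ol e])$ is exactly where mass could escape along infinitely many branches. Saying ``every term is eventually accounted for'' only shows that the unaccounted mass at level $N$ decreases; it does not show that it tends to $0$. That limit is precisely what the extension theorem supplies. Equivalently, you can prove it directly: pick finite sets of level-$N$ paths carrying all but $\delta 2^{-N}$ of the mass and apply K\"onig's lemma.
\item Cylinders are not a semiring in the usual sense when some $s^{-1}(w)$ is infinite, because differences are then only countable unions of cylinders. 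This is another reason to go straight through the projective-limit theorem.
\end{itemize}
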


We will need the following fact, which follows from \eqref{eq def p^(n)}: if $\mu$ is a tail invariant probability measure, then
\be\label{eq meas of towers}
\sum_{v \in V_n} H^{(n)}_v p_v^{(n)} =1, \quad\ \forall n \in \N_0.  
\ee

\section{Ratio Tests for finite paths}\label{sect Ratio test}

In this section, we discuss the following problem. Given a generalized Bratteli diagram $B = B(F_n)$, find sufficient conditions on incidence matrices $F_n$ under which there is no probability tail invariant measure on the path space $X_B$. We will use the notation introduced in Section \ref{sect BD}. 

We will give two proofs of the following theorem.

\begin{theorem}\label{thm test 1} Let $B = B(F_n)$ be a generalized Bratteli diagram defined by a sequence of incidence matrices $(F_n)$.
Suppose that for all $w\in V_{0}$,
\begin{align}\label{eq suff cond 1}
\liminf_{n\to\infty}\left(\sup_{v\in V_{n}}\frac{h_{v,w}^{(n)}}{H_{v}^{(n)}}\right)=0
\end{align}
Then $B$ does not admit a probability tail invariant measure. Moreover, 
this condition is sufficient but not necessary. 
\end{theorem}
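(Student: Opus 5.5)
Suppose, towards a contradiction, that $\mu$ is a tail invariant probability measure on $X_B$, with associated vectors $p^{(n)}$ as in \eqref{eq def p^(n)}. The plan is to compute the measure of a level-zero tower $X_w^{(0)}$ in two ways and show that hypothesis \eqref{eq suff cond 1} forces it to vanish. Since $X_B=\bigsqcup_{w\in V_0}X_w^{(0)}$ and $\mu(X_B)=1$, some $w\in V_0$ has $\mu(X_w^{(0)})>0$, and this will give the contradiction.

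\textbf{Step 1 (decomposing the tower).} Fix $w\in V_0$ and $n\in\N$. Every path in $X_w^{(0)}$ passes through exactly one vertex $v\in V_n$. The cylinders over finite paths from $w$ to $v$ are disjoint, and by tail invariance each has measure $p_v^{(n)}$. Hence
\[
\mu(X_w^{(0)})=\sum_{v\in V_n} h_{v,w}^{(n)}\,p_v^{(n)} .
\]
Countable additivity justifies the sum over the countable set $V_n$.

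\textbf{Step 2 (the ratio bound).} Write $h_{v,w}^{(n)}=\frac{h_{v,w}^{(n)}}{H_v^{(n)}}\,H_v^{(n)}$; here $H_v^{(n)}\ge1$ because $r^{-1}(v)\neq\emptyset$. Then
\[
\mu(X_w^{(0)})\le \Big(\sup_{v\in V_n}\frac{h_{v,w}^{(n)}}{H_v^{(n)}}\Big)\sum_{v\in V_n}H_v^{(n)}p_v^{(n)} =\sup_{v\in V_n}\frac{h_{v,w}^{(n)}}{H_v^{(n)}} ,
\]
using the normalization \eqref{eq meas of towers}. The left side does not depend on $n$. Taking $\liminf_{n\to\infty}$ and applying \eqref{eq suff cond 1} gives $\mu(X_w^{(0)})=0$ for every $w\in V_0$.

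\textbf{Step 3 (contradiction).} Summing over the countable set $V_0$ gives $\mu(X_B)=0$, which contradicts $\mu(X_B)=1$. This proves the sufficiency part. A second proof could be phrased through the recursion \eqref{eq_inv meas via A_n}: since $p^{(0)}=A_0\cdots A_{n-1}p^{(n)}$, the coordinate $p^{(0)}_w$ equals $\sum_v h^{(n)}_{v,w}p^{(n)}_v$, and the same estimate applies.

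\textbf{Step 4 (non-necessity).} This is the only part requiring real work. It needs a generalized Bratteli diagram with no probability tail invariant measure for which \eqref{eq suff cond 1} fails for some $w$. One candidate is a diagram in which a fixed vertex $w_0\in V_0$ feeds every vertex at every level with a bounded-below fraction of paths, so that the ratios for $w_0$ stay away from zero, while mass still escapes. A natural attempt is a stationary diagram where every vertex $v\in V_{n+1}$ receives one edge from the vertex $0\in V_n$ and one edge from $v\in V_n$. Then $h_{v,0}^{(n)}/H_v^{(n)}$ stays bounded below by a constant, and one checks via \eqref{eq meas of towers} that the recursion $A p^{(n+1)}=p^{(n)}$ has no summable solution. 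Concretely, the mass in the tower over $0$ must equal $\sum_v p_v^{(n+1)}$ times the heights, and this diverges. Finding a clean example and verifying the nonexistence of a probability measure by hand is the main obstacle. If this candidate fails, I would use the spectral result of Section~\ref{sect 4}, for instance the matrix $F$ from the introduction, provided its ratios for $w=0$ are bounded below along the diagonal.
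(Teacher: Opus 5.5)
Your Steps 1--3 are essentially the paper's first proof: decompose $[w]$ into level-$n$ cylinders, bound by the supremum using \eqref{eq meas of towers}, take $\liminf$, and sum over $V_0$. That part is correct. The gap is Step 4, and your candidate diagram fails. In it, vertex $0\in V_{n+1}$ receives edges only from $0\in V_n$. So if a path agrees from level $N$ on with a path sitting at $0$ at every level, it sits at $0$ at all levels $\le N$ too. Hence the set of paths passing through $0$ at every level is closed under tail equivalence. With two edges $0\to0$ this set is a $2$-odometer. Taking $p^{(n)}_0=2^{-n}$ and $p^{(n)}_v=0$ for $v\neq0$ solves $Ap^{(n+1)}=p^{(n)}$, and $\sum_v H^{(n)}_vp^{(n)}_v=H^{(n)}_0 2^{-n}=1$. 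With a single edge you get a Dirac measure on one path instead. So the claimed divergence is false: this diagram carries a probability tail invariant measure.

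Your fallback fails as well, for a structural reason. If every vertex has only finitely many outgoing edges (each column of each $F_m$ is finitely supported), then \eqref{eq suff cond 1} is also necessary. Suppose $h^{(n)}_{v_n,w_0}\ge\delta H^{(n)}_{v_n}$ for infinitely many $n$. Pass to a subsequence along which $h^{(n,m)}_{v_n,u}/H^{(n)}_{v_n}\to p^{(m)}_u$ for all $m$ and $u\in V_m$; this is possible because these numbers lie in $[0,1]$. The identity $h^{(n,m)}_{v_n,u}=\sum_{v\in V_{m+1}}h^{(n,m+1)}_{v_n,v}f^{(m)}_{v,u}$ is then a finite sum, so $A_mp^{(m+1)}=p^{(m)}$ holds in the limit. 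Moreover $p^{(0)}_{w_0}\ge\delta$, and $\sum_up^{(0)}_u\le1$ by Fatou. By Theorem~\ref{thm inv measures} this gives a finite nonzero tail invariant measure, which you can normalize.

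The tridiagonal matrix of Section~\ref{sect 4} is column-finite and has no probability measure (Theorem~\ref{thm no prob meas}), so it must satisfy \eqref{eq suff cond 1}. It cannot be a witness. You also cannot borrow the paper's witness, Example~\ref{ex not nessas}. It is column-finite too, and there in fact $H^{(n)}_1=3^n+n3^{n-1}$ (e.g.\ $H^{(2)}_1=15$). So $h^{(n)}_{1,1}/H^{(n)}_1=3/(n+3)\to0$, and the stated value $3/4$ is correct only for $n=1$.

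A genuine witness must use a vertex with infinitely many outgoing edges, as you sensed, but without a closed column. For example, let $V_0=\{0,1,2,\dots\}$ and $V_n=\{1,2,\dots\}$ for $n\ge1$. Let each $u\in V_1$ receive one edge from $0$ and one from $u\in V_0$. For $n\ge1$, let each $u\in V_{n+1}$ receive two edges from $u\in V_n$ and one from $u+1\in V_n$.

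\begin{itemize}
\item Then $H^{(n)}_v=2\cdot3^{n-1}$ and $h^{(n)}_{v,0}/H^{(n)}_v=1/2$ for all $n\ge1$ and all $v$. So \eqref{eq suff cond 1} fails at $w=0$.
\item Every path is eventually constant at some $u$, since vertex indices are non-increasing from level $1$ on.
\item Tail invariance gives the set of paths staying at $u$ from level $N$ on measure $2(3/2)^{N-1}c_u$ for some constant $c_u\ge0$. A finite measure therefore forces $c_u=0$ for every $u$, so every finite tail invariant measure vanishes.
\end{itemize}
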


\begin{proof}
Suppose that there exists a tail invariant measure $\mu$ such that $\mu(X_{B})=1$. Then, for all $n\in\mathbb{N}$,
\begin{align*}
\sum_{v\in V_{n}}H_{v}^{(n)}p_{v}^{(n)}=1.
\end{align*}
Fix $w\in V_{0}$ and denote  
\begin{align*}
[w]=\{x\in X_{B}:s(x)=w\},
\end{align*}
where $s(x)=s(x_{0})$. Then, for every $n \in \N$,
\begin{align}\label{eq [w]}
[w]=\bigsqcup_{v\in V_{n}} \ \bigsqcup_{\substack{\overline{e},s(\overline{e})= w\\ 
r(\overline{e})=v}}[\overline{e}].
\end{align}
Indeed, $x\in[w]$ if and only if $x\in E(w,r(x_{n-1}))$. 
This means that the set $[w]$ is represented as the disjoint union of cylinder sets $[\ol e]$ defined by finite paths of length $n$ starting at $w$ and ending at vertices of $V_n$. Then, using the notation defined in Section  \ref{sect BD} and \eqref{eq meas of towers}, \eqref{eq [w]}, we can write
\be \label{eq meas [w]}
\ba
\mu([w])&=\sum_{v\in V_{n}}h_{v,w}^{(n)}p_{v}^{(n)}\\
&=\sum_{v\in V_{n}}h_{v,w}^{(n)}\frac{1}{H_{v}^{(n)}}H_{v}^{(n)}p_{v}^{(n)}\\
&\leq\left(\sup_{v\in V_{n}}\frac{h_{v,w}^{(n)}}{H_{v}^{(n)}}\right)\sum_{v\in V_{n}}H_{v}^{(n)}p_{v}^{(n)}\\
&=\sup_{v\in V_{n}}\frac{h_{v,w}^{(n)}}{H_{v}^{(n)}}.
\ea
\ee
Taking the limit in \eqref{eq meas [w]}, we obtain the inequality 
\begin{align}
\mu([w])\leq\liminf_{n\to\infty}\left(\sup_{v\in V_{n}}\frac{h_{v,w}^{(n)}}{H_{v}^{(n)}}\right)=0.
\end{align}
Hence $\mu([w])=0$ and the relation
\begin{align*}
\mu(X_{B})=\mu\left(\bigsqcup_{w\in V_{0}}[w]\right)=\sum_{w\in V_{0}}\mu([w])=0
\end{align*}
contradicts the assumption about the measure $\mu$.

The claim that \eqref{eq suff cond 1} is not necessary follows from Example \ref{ex not nessas}, which is based, in turn, on Theorem \ref{Thm:measures_Exk}. 
\end{proof}

The result of Theorem \ref{thm test 1} also has a dynamical interpretation in terms of the Vershik map, which will be illustrated by the following alternative proof.

\textit{Proof} (of Theorem \ref{thm test 1})
Assume, toward a contradiction, that both 
$$
\sup_{v\in V_{n}}\frac{h_{v,w}^{(n)}}{H_{v}^{(n)}}\to0
$$ 
as $n\to\infty$, and also that we have a tail invariant probability measure $\mu$ on $X_B$. Moreover, we can assume, without loss of generality, that there is a Vershik map $\varphi$ on the path space $X_{B}$ for some order $B$. This is because the Vershik map is naturally defined everywhere except for the set $X_{B}(\text{max})$ of maximal paths, and $\varphi(X_{B}(\text{max}))\cap X_{B}(\text{max})=\emptyset$ and $\mu(X_{B}(\text{max}))=0$ for any probability tail invariant measure $\mu$. Also, we can assume that the measure $\mu$ is ergodic. More discussion on relations between tail equivalence relations and Vershik maps can be found in \cite{GiordanoPutnamSkau1995, GiordanoPutnamSkau2004, BezuglyiKwiatkowskiMedynetsSolomyak2010}.

For $[w] :=\{x \in X_{B}:s(x)=w\}$, we have
$$
[w]=\bigcup_{v\in V_{n}}Y_{w,v}^{(n)},
$$
where the sets
$$
Y_{v,w}^{(n)} =\{x =(x_{i})\in X_{B}:s(x_{0})=w, r(x_{n-1})=v\}
$$
are disjoint. 
Note also that $X_{v}^{(n)}=\bigcup_{w\in V_{0}}Y_{v,w}^{(n)}$ and that this is a finite disjoint union.

So we compute
$$\mu([w])=\sum_{v\in V_{n}}\mu(Y_{v,w}^{(n)})=\sum_{v\in V_{n}}p^{(n)}(F_{n-1}\cdots F_{0})_{v,w}$$
where $p_{v}^{(n)}=\mu([\overline{e}])$, $r(\overline{e})=v\in V_{n}$. Fix an $\overline{e}\in E(v,w)$ and consider $\chi_{[\overline{e}]}$. Suppose $\overline{x}\in[\overline{e}]$, then $\overline{x}=(x_{n})$ and let $u_{n}=s(x_{n})\in V_{n}$. By the ergodic theorem,
$$\mu([\overline{e}])=\lim_{N\to\infty}\frac{1}{N}\sum_{k=0}^{N-1}\chi_{[\overline{e}]}(\varphi^{k}(\overline{x})),$$
where $\varphi$ is the Vershik map. Since the limit exists and does not depend on $\overline{x}$, we can take a subsequence $\left\{H_{u_n}^{(n)}\right\}$ defined by a fixed path $\overline{x}$. Thus,
$$\mu([\overline{e}])=\lim_{m\to\infty}\frac{1}{H_{u_m}^{(m)}}\sum_{k=0}^{H_{u_m}^{(m)}-1}\chi_{[\overline{e}]}(\varphi^{k}\overline{x}).$$

\[\begin{tikzcd}[cramped,row sep=huge]
	{V_0} & {} && \begin{array}{c} w\\\bullet \end{array} && {} & {} \\
	&&&& {X^{(m)}_{u_{m}}} \\
	{V_n} && \begin{array}{c} \bullet\\u_n \end{array} &&&& {} \\
	\\
	{V_m} &&& \begin{array}{c} \bullet\\u_m \end{array} &&& {} \\
	&&&& {}
	\arrow[no head, from=1-1, to=1-7]
	\arrow[curve={height=24pt}, no head, from=1-2, to=5-4]
	\arrow[curve={height=-12pt}, no head, from=1-4, to=3-3]
	\arrow[curve={height=12pt}, no head, from=1-4, to=3-3]
	\arrow["{\overline{e}}"{description}, color={rgb,255:red,92;green,92;blue,214}, curve={height=6pt}, no head, from=1-4, to=3-3]
	\arrow[color={rgb,255:red,214;green,92;blue,92}, curve={height=-6pt}, no head, from=1-4, to=3-3]
	\arrow[curve={height=-24pt}, no head, from=1-6, to=5-4]
	\arrow[no head, from=3-1, to=3-7]
	\arrow[curve={height=12pt}, no head, from=3-3, to=5-4]
	\arrow[curve={height=-12pt}, no head, from=3-3, to=5-4]
	\arrow[color={rgb,255:red,214;green,92;blue,92}, curve={height=6pt}, no head, from=3-3, to=5-4]
	\arrow["{E(u_{n},u_{m})}"{description}, draw=none, from=3-3, to=5-4]
	\arrow[no head, from=5-1, to=5-7]
	\arrow["{\overline{x}}"{description}, color={rgb,255:red,214;green,92;blue,92}, curve={height=-6pt}, no head, from=5-4, to=6-5]
\end{tikzcd}\]

We now claim that the orbit $\{\varphi^{k}(\overline{x})\}$ visits $[\overline{e}]$ exactly $|E(u_{n},u_{m})|$ times. (See the attached diagram). Hence,
\be
\ba
\mu([\overline{e}])&=\lim_{m\to\infty}\frac{1}{H_{u_{m}}^{(m)}}|E(u_{n},u_{m})|\\
&=\lim_{m\to\infty}\frac{1}{H_{u_{m}}^{(m)}}(F_{m-1}\cdots F_{n})_{u_{m},u_{n}}.
\ea
\ee
Then
\be
\ba
\mu(Y_{u_{n},w}^{(n)})&=\mu([\overline{e}])|E(u_{n},w)|\\
&=\mu([\overline{e}])(F_{n-1}\cdots F_{0})_{w,u_{n}}\\
&=\lim_{m\to\infty}\frac{1}{H_{u_{m}}^{(m)}}(F_{m-1}\cdots F_{n})_{u_{m},u_{n}}(F_{n-1}\cdots F_{0})_{w,u_{n}}
\ea
\ee
Note that 
\be
(F_{m-1}\cdots F_{n})_{u_{n},u_{m}}(F_{n-1}\cdots F_{0})_{w,u_{n}} \leq (F_{m-1}\cdots F_{0})_{u_{m},w}\\
 =h_{u_{m},w}^{(m)}.
\ee
Thus,
\be
\ba
\mu(Y_{u_{n},w}^{(n)})&\leq\lim_{m\to\infty}\frac{h_{u_{m},w}^{(m)}}{H_{u_{m}}^{(m)}}\\
&\leq\lim_{m\to\infty}\sup_{v\in V_{m}}\frac{h_{v,w}^{(m)}}{H_{v}^{(m)}}=0.
\ea
\ee
Clearly, if $u\in V_{n}$ is such that $Y_{u,w}^{(n)}\neq\emptyset$, then we can take a path $\overline{x}$ that goes through the vertices $w$ and $u$ and apply the above argument. Thus, we proved that 
$$\mu([w])=\sum_{u\in V_{n}}\mu(Y_{u,w}^{(n)})=0,
$$
which is a contradiction.
\hfill$\Box$

\begin{example}
Let $V_{n}=\mathbb{Z}$ and consider the tridiagonal symmetric matrix $F$ where $f_{ii}=2$ and $f_{i,i\pm1}=1$, and $f_{i,j}=0$ otherwise. Consider the Bratteli diagram $B$ where $B=B(F)$.

\[\cdots\begin{tikzcd}[cramped]
	\bullet & \bullet & \bullet & \bullet & \bullet \\
	\bullet & \bullet & \bullet & \bullet & \bullet \\
	\bullet & \bullet & \bullet & \bullet & \bullet \\
	{} & {} & {} & {} & {}
	\arrow[curve={height=-6pt}, no head, from=1-1, to=2-1]
	\arrow[curve={height=6pt}, no head, from=1-1, to=2-1]
	\arrow[no head, from=1-1, to=2-2]
	\arrow[no head, from=1-2, to=2-1]
	\arrow[curve={height=-6pt}, no head, from=1-2, to=2-2]
	\arrow[curve={height=6pt}, no head, from=1-2, to=2-2]
	\arrow[no head, from=1-2, to=2-3]
	\arrow[no head, from=1-3, to=2-2]
	\arrow[curve={height=-6pt}, no head, from=1-3, to=2-3]
	\arrow[curve={height=6pt}, no head, from=1-3, to=2-3]
	\arrow[no head, from=1-3, to=2-4]
	\arrow[no head, from=1-4, to=2-3]
	\arrow[curve={height=-6pt}, no head, from=1-4, to=2-4]
	\arrow[curve={height=6pt}, no head, from=1-4, to=2-4]
	\arrow[no head, from=1-4, to=2-5]
	\arrow[no head, from=1-5, to=2-4]
	\arrow[curve={height=-6pt}, no head, from=1-5, to=2-5]
	\arrow[curve={height=6pt}, no head, from=1-5, to=2-5]
	\arrow[curve={height=6pt}, no head, from=2-1, to=3-1]
	\arrow[curve={height=-6pt}, no head, from=2-1, to=3-1]
	\arrow[no head, from=2-1, to=3-2]
	\arrow[no head, from=2-2, to=3-1]
	\arrow[curve={height=6pt}, no head, from=2-2, to=3-2]
	\arrow[curve={height=-6pt}, no head, from=2-2, to=3-2]
	\arrow[no head, from=2-2, to=3-3]
	\arrow[no head, from=2-3, to=3-2]
	\arrow[curve={height=-6pt}, no head, from=2-3, to=3-3]
	\arrow[curve={height=6pt}, no head, from=2-3, to=3-3]
	\arrow[no head, from=2-3, to=3-4]
	\arrow[no head, from=2-4, to=3-3]
	\arrow[curve={height=6pt}, no head, from=2-4, to=3-4]
	\arrow[curve={height=-6pt}, no head, from=2-4, to=3-4]
	\arrow[no head, from=2-4, to=3-5]
	\arrow[no head, from=2-5, to=3-4]
	\arrow[curve={height=6pt}, no head, from=2-5, to=3-5]
	\arrow[curve={height=-6pt}, no head, from=2-5, to=3-5]
	\arrow["\vdots"{description}, draw=none, from=3-1, to=4-1]
	\arrow["\vdots"{description}, draw=none, from=3-2, to=4-2]
	\arrow["\vdots"{description}, draw=none, from=3-3, to=4-3]
	\arrow["\vdots"{description}, draw=none, from=3-4, to=4-4]
	\arrow["\vdots"{description}, draw=none, from=3-5, to=4-5]
\end{tikzcd}\cdots\]

It is clear that we have the recurrence relations $h_{v,w}^{(n)}=2h_{v,w}^{(n-1)}+h_{v+1,w}^{(n-1)}+h_{v-1,w}^{(n-1)}$. We observe that this results in us having every other level of Pascal's triangle, as in the following, where the middle represents $h^{(n)}_{w,w}$:
$$
\begin{matrix}
w-v:&4&3&2&1&0&-1&-2&-3&-4\\
\hline
h^{(0)}_{v,w}:&&&&&1\\
h^{(1)}_{v,w}:&&&&1&2&1\\
h^{(2)}_{v,w}:&&&1&4&6&4&1\\
h^{(3)}_{v,w}:&&1&6&15&20&15&6&1\\
h^{(4)}_{v,w}:&1&8&28&56&70&56&28&8&1\\
\vdots&&&&\vdots
\end{matrix}
$$
Note that the choice of $w$ may be arbitrary, as $B$ is horizontally stationary.
 Therefore, it follows that, for $|w-v|\leq n$,
$$h^{(n)}_{v,w}=\begin{pmatrix}2n\\n-(v-w)\end{pmatrix},$$
which is the binomial coefficient, and
$$H^{(n)}_{v}=4^{n}.$$ Since clearly $h^{(n)}_{v,w}$ attains its largest value when $v=w$, we compute that
$$
\liminf_{n\to\infty}\sup_{v\in V_{n}}\frac{h^{(n)}_{v,w}}{H^{(n)}_{v}}=\lim_{n\to\infty}\frac{(2n)!}{(n!)^{2}\cdot 4^{n}}=0.
$$
Therefore, by Theorem \ref{thm test 1}, this Bratteli diagram admits no tail invariant probability measure.
\end{example}

\begin{remark}
This example was considered in \cite{BezuglyiJorgensenKarpelKwiatkowski2025}. The advantage of the method used in Theorem \ref{thm test 1} is its computational ease. We expect that other results from \cite{BezuglyiJorgensenKarpelKwiatkowski2025} about the Bratteli diagrams that do not support probability tail invariant measures can be obtained by applying Theorem \ref{thm test 1}. 

\end{remark}

We show now that condition \eqref{eq suff cond 1} is sufficient but not necessary; see the example below. For this, we first recall the following result proved in \cite{BezuglyiKarpelKwiatkowski2024}. 

\begin{theorem}\label{Thm:measures_Exk}
    Let $B$ be a stationary generalized Bratteli diagram
 with incidence matrix
 $$
F = \begin{pmatrix}
    a & 1 & 0 & 0 & 0 &\ldots\\
    0 & a-k & 1 & 0 & 0 &\ldots\\
    0 & 0 & a-k & 1 & 0 &\ldots\\
    0 & 0 & 0 & a-k & 1 &\ldots\\
    \vdots & \vdots & \vdots & \vdots & \vdots &\ddots
\end{pmatrix},
 $$
 where $a, k \in \mathbb{N}$ and $a - k > 1$. 
 Then there is a unique probability ergodic invariant measure $\mu$ on $B$ if and only if $k > 1$. 
 If $k = 1$, then there are no probability invariant measures on $B$. 
 \end{theorem}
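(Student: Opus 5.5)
We work with the vector description of tail invariant measures from Theorem \ref{thm inv measures}: a tail invariant measure is the same thing as a sequence of nonnegative vectors $p^{(n)}$ with $A p^{(n+1)} = p^{(n)}$, where $A = F^T$ because the diagram is stationary. Index $V_n$ by $\{1,2,3,\dots\}$. Then $A$ is lower bidiagonal:
\[
(Ap)_1 = a p_1, \qquad (Ap)_i = p_{i-1} + (a-k)p_i \quad (i\ge 2).
\]
So the equation $Ap^{(n+1)}=p^{(n)}$ can be solved coordinatewise, from the first vertex upward. This gives the plan: first describe the positive eigenvectors of $A$, then show that every solution sequence is a combination of such stationary solutions, and finally test the normalization \eqref{eq meas of towers} to decide finiteness.

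\textbf{Step 1 (eigenvectors).} Look for $x$ with $Ax = \lambda x$. The first coordinate gives $\lambda = a$, provided $x_1 \neq 0$. The remaining coordinates then satisfy $x_{i-1} = k x_i$, so $x_i = k^{-(i-1)}x_1$, which is a positive vector. For $\lambda = a-k$ we would need $x_1 = 0$, and then every $x_i=0$. Hence the only positive eigenvector is
\[
\xi = (k^{-(i-1)})_{i\ge 1}, \qquad p^{(n)} = a^{-n}\xi .
\]
This sequence defines a tail invariant measure $\mu$ that is finite on cylinders.

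\textbf{Step 2 (finiteness).} We must check whether $\sum_i H^{(n)}_i p^{(n)}_i < \infty$. Since $F$ is upper bidiagonal, $H^{(1)}_i = (a-k)+1$ for $i\ge2$ and $H^{(1)}_1=a+1$. By induction $H^{(n)}_i \approx (a-k+1)^n$, uniformly bounded above by $(a+1)^n$. Therefore
\[
\mu(X_B) = \sum_i H^{(n)}_i a^{-n} k^{-(i-1)}
\]
is finite exactly when $\sum_i k^{-(i-1)}<\infty$, that is, when $k>1$. Normalizing then gives the probability measure. For $k=1$ every term is comparable to $a^{-n}(a-k+1)^n$, so the sum diverges and $\mu$ is infinite.

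\textbf{Step 3 (uniqueness and exhaustion).} This is the main obstacle: we must show that every solution $(p^{(n)})$ is a multiple of $a^{-n}\xi$.

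1. From $(Ap^{(n+1)})_1 = a p^{(n+1)}_1 = p^{(n)}_1$ we get $p^{(n)}_1 = a^{-n}c$ for a constant $c = p^{(0)}_1$.
2. At the second coordinate, $p^{(n)}_2 = (p^{(n-1)}_1 - p^{(n)}_1)/(a-k)$ is forced, up to the top coordinates at deeper levels. Iterating $p^{(n)} = A^{-1}$-type backward recursions, I will write $p^{(0)}$ as a limit $A^m p^{(m)}$ and use nonnegativity as $m\to\infty$.
3. Decompose $p^{(m)}$ into the $\xi$-component and a remainder supported where the coefficient $a-k<a$ governs growth. Applying $A^m$, the remainder contributes terms of order $(a-k)^m$ times binomial mixing, while the $\xi$-part grows like $a^m$.
4. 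Nonnegativity of all $p^{(m)}$ forces the remainder to vanish in the limit. I would try to make this precise through the explicit formula $(A^m)_{ij} = \binom{m}{i-j}(a-k)^{m-(i-j)}$ for $i > j\ge2$, with the first column involving powers of $a$.

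This gives uniqueness, hence ergodicity, when $k>1$. When $k=1$, any probability measure would have to be proportional to the infinite measure of Step 2, so none exists.

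Alternatively, for the direction $k=1$ one could verify condition \eqref{eq suff cond 1}, but that is exactly the case the example intends to fail. So the direct eigenvector argument above is the route I would follow.
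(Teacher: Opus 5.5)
Your Steps 1 and 2 are correct, and Step 2 is simpler than you make it. Since $H^{(0)}\equiv 1$, the total mass is just $\sum_i p^{(0)}_i=\sum_i k^{-(i-1)}$; your estimate $H^{(n)}_i\approx (a-k+1)^n$ is exact for $i\ge 2$ but false for $i=1$.

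The paper only quotes this theorem from the literature, so the question is whether your argument stands on its own, and it does not yet. Step 3 carries the whole theorem: it gives nonexistence for $k=1$ and uniqueness, hence ergodicity, for $k>1$. The mechanism you propose there is the wrong one. Solve the lower-triangular system coordinatewise. If $p^{(n)}_{j-1}=c\,k^{-(j-2)}a^{-n}$ for all $n$, then $p^{(n)}_j=c\,k^{-(j-1)}a^{-n}+d_j(a-k)^{-n}$ with a free constant $d_j\ge 0$. So the ``remainder'' in $p^{(m)}$ is of order $(a-k)^{-m}$, which dominates the $\xi$-part $a^{-m}$. Comparing the rates $a^m$ and $(a-k)^m$ therefore points the wrong way. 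Nonnegativity at coordinate $j$ alone also cannot remove this mode: every value $p^{(0)}_2\ge c/k$ is admissible on the first two coordinates. Your item 2 should read $p^{(n)}_2=(p^{(n-1)}_2-p^{(n)}_1)/(a-k)$, and it forces nothing. In fact, in any finite truncation of this diagram the corresponding mode at the last vertex really does produce a second ergodic measure, so no argument based only on exponential rates can succeed.

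The missing idea is the resonance between neighbouring vertices $j,j+1\ge 2$, which carry the same diagonal entry $a-k$. Inserting $d_j(a-k)^{-n}$ into the equation for coordinate $j+1$ gives, for some constant $s$,
\[
p^{(n)}_{j+1}=c\,k^{-j}a^{-n}+(a-k)^{-n}\Bigl(s-\frac{n\,d_j}{a-k}\Bigr),
\]
which becomes negative for large $n$ unless $d_j=0$.

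In the terms of your item 4, this is a one-line estimate. Since $p^{(0)}=A^m p^{(m)}$ is a sum of nonnegative terms,
\[
p^{(0)}_{j+1}\ge (A^m)_{j+1,j}\,p^{(m)}_j=m(a-k)^{m-1}p^{(m)}_j .
\]
This gives $(a-k)^m p^{(m)}_j\to 0$, hence $d_j=0$. It is the binomial factor $m$ that kills the remainder, and it is available at every $j$ only because the chain of vertices never ends. Run this induction on $j$, starting from $p^{(n)}_1=c\,a^{-n}$. Then every nonnegative solution equals $c\,a^{-n}\xi$, and the rest of your plan goes through: finiteness iff $k>1$, nonexistence for $k=1$, and uniqueness implying ergodicity.
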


Theorem \ref{Thm:measures_Exk} is used in the following example.

\begin{example} \label{ex not nessas}
Let $B = B(F)$ be the stationary generalized Bratteli diagram defined by the incidence matrix 
$$
F=\begin{pmatrix}
3 &1&0&0&\cdots\\0&2&1&0&\cdots\\0&0&2&1&\cdots\\\vdots&\vdots&\vdots&\ddots&\ddots
\end{pmatrix}
$$

\[\begin{tikzcd}[cramped]
	\bullet & \bullet & \bullet & \bullet & \bullet \\
	\bullet & \bullet & \bullet & \bullet & \bullet \\
	\bullet & \bullet & \bullet & \bullet & \bullet \\
	{} & {} & {} & {} & {}
	\arrow[curve={height=-6pt}, no head, from=1-1, to=2-1]
	\arrow[curve={height=6pt}, no head, from=1-1, to=2-1]
	\arrow[no head, from=1-1, to=2-1]
	\arrow[no head, from=1-1, to=2-2]
	\arrow[no head, from=1-2, to=2-1]
	\arrow[curve={height=-6pt}, no head, from=1-2, to=2-2]
	\arrow[curve={height=6pt}, no head, from=1-2, to=2-2]
	\arrow[no head, from=1-2, to=2-3]
	\arrow[no head, from=1-3, to=2-2]
	\arrow[curve={height=-6pt}, no head, from=1-3, to=2-3]
	\arrow[curve={height=6pt}, no head, from=1-3, to=2-3]
	\arrow[no head, from=1-3, to=2-4]
	\arrow[no head, from=1-4, to=2-3]
	\arrow[curve={height=-6pt}, no head, from=1-4, to=2-4]
	\arrow[curve={height=6pt}, no head, from=1-4, to=2-4]
	\arrow[no head, from=1-4, to=2-5]
	\arrow[no head, from=1-5, to=2-4]
	\arrow[curve={height=-6pt}, no head, from=1-5, to=2-5]
	\arrow[curve={height=6pt}, no head, from=1-5, to=2-5]
	\arrow[curve={height=6pt}, no head, from=2-1, to=3-1]
	\arrow[curve={height=-6pt}, no head, from=2-1, to=3-1]
	\arrow[no head, from=2-1, to=3-1]
	\arrow[no head, from=2-1, to=3-2]
	\arrow[no head, from=2-2, to=3-1]
	\arrow[curve={height=6pt}, no head, from=2-2, to=3-2]
	\arrow[curve={height=-6pt}, no head, from=2-2, to=3-2]
	\arrow[no head, from=2-2, to=3-3]
	\arrow[no head, from=2-3, to=3-2]
	\arrow[curve={height=-6pt}, no head, from=2-3, to=3-3]
	\arrow[curve={height=6pt}, no head, from=2-3, to=3-3]
	\arrow[no head, from=2-3, to=3-4]
	\arrow[no head, from=2-4, to=3-3]
	\arrow[curve={height=6pt}, no head, from=2-4, to=3-4]
	\arrow[curve={height=-6pt}, no head, from=2-4, to=3-4]
	\arrow[no head, from=2-4, to=3-5]
	\arrow[no head, from=2-5, to=3-4]
	\arrow[curve={height=6pt}, no head, from=2-5, to=3-5]
	\arrow[curve={height=-6pt}, no head, from=2-5, to=3-5]
	\arrow["\vdots"{description}, draw=none, from=3-1, to=4-1]
	\arrow["\vdots"{description}, draw=none, from=3-2, to=4-2]
	\arrow["\vdots"{description}, draw=none, from=3-3, to=4-3]
	\arrow["\vdots"{description}, draw=none, from=3-4, to=4-4]
	\arrow["\vdots"{description}, draw=none, from=3-5, to=4-5]
\end{tikzcd}\cdots\]

Then by Theorem \ref{Thm:measures_Exk}, we know that there is no probability tail invariant measure on $X_B$. We will show that condition 
\eqref{eq suff cond 1} does not hold. For this, it suffices to find a vertex $w_0 \in V_0$ such that, for some $\delta >0$,
$$
\sup_{v\in V_{n}}\frac{h_{v,w_0}^{(n)}}{H_{v}^{(n)}} > \delta.
$$
Clearly, if $v > 1$, then $H^{(n)}_{v}=3^{n}$ and $h^{(n)}_{v,w} < 2^n$, so the ratio $\frac{h^{(n)}_{v,w}}{H^{(n)}_{v}}$ tends to 0 as $n \to \infty$. On the other hand, if $w_0 =1$, then 
$$
\frac{h^{(n)}_{1,1}}{H^{(n)}_{1}} = \frac{3^n}{3^n + 3^{n-1}} = \frac{3}{4}.
$$
This shows that condition \eqref{eq suff cond 1} is not necessary.

\end{example}

The following result can be considered as a generalization of Theorem \ref{thm test 1}.

\begin{theorem} Let $B = B(F_n)$ be a generalized Bratteli diagram defined by a sequence of incidence matrices $(F_n)$.
Suppose that there exists some level $k$ such that, for all finite subsets $W\subset V_{k}$, we have  
\be\label{eq suff cond 2}
\liminf_{n\to\infty} \left( \sup_{v\in V_{n}}\frac{\sum_{w\in W}H_{w}^{(k)}h_{v,w}^{(n,k)}}{H_{v}^{(n)}}\right) =0,
\ee
where $h_{v,w}^{(n,k)}$ denotes the number of finite paths between the vertices $w \in W$ and $v \in V_n, n >k.$
Then the generalized Bratteli diagram $B$ admits no probability tail invariant measure.
\end{theorem}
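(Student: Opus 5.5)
We argue by contradiction, following the first proof of Theorem \ref{thm test 1} but replacing level $0$ by level $k$. Suppose $\mu$ is a tail invariant probability measure with associated vectors $(p^{(n)})$, so that \eqref{eq meas of towers} holds for every $n$. For $w\in V_k$, consider the tower $X_w^{(k)}=\{x\in X_B : s(x_k)=w\}$. Its measure is $\mu(X_w^{(k)})=H_w^{(k)}p_w^{(k)}$, and these towers partition $X_B$. Consequently $\sum_{w\in V_k}\mu(X_w^{(k)})=1$.

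Since $V_k$ is countable, we can choose a finite set $W\subset V_k$ with $\sum_{w\in W}\mu(X_w^{(k)})>1/2$. This choice replaces the step of Theorem \ref{thm test 1} where one fixes a single vertex: a finite union of towers carries almost all of the mass. It is exactly why hypothesis \eqref{eq suff cond 2} is required for all finite $W$ and not merely for singletons.

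Next, fix $n>k$ and decompose $X_w^{(k)}$ by the vertex through which a path passes at level $n$. A path in $X_w^{(k)}$ with $s(x_n)=v$ is determined, up to its tail beyond level $n$, by a choice of one of $H_w^{(k)}$ paths from $V_0$ to $w$ together with one of $h^{(n,k)}_{v,w}$ paths from $w$ to $v$. The result is a disjoint union of $H_w^{(k)}h_{v,w}^{(n,k)}$ cylinder sets ending at $v$, each of measure $p_v^{(n)}$ by \eqref{eq def p^(n)}. Hence
$$
\sum_{w\in W}\mu(X_w^{(k)})=\sum_{v\in V_n}\Big(\sum_{w\in W}H_w^{(k)}h_{v,w}^{(n,k)}\Big)p_v^{(n)}
\le \sup_{v\in V_n}\frac{\sum_{w\in W}H_w^{(k)}h_{v,w}^{(n,k)}}{H_v^{(n)}}\sum_{v\in V_n}H_v^{(n)}p_v^{(n)} .
$$
By \eqref{eq meas of towers}, the last sum equals $1$. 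Passing to the $\liminf$ over $n$ and using \eqref{eq suff cond 2}, we obtain $\sum_{w\in W}\mu(X_w^{(k)})=0$, which contradicts the choice of $W$.

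The proof requires no interchange of limits, since the left side does not depend on $n$. The only points needing care are the counting identity for the decomposition, which rests on the multiplicativity of path counts between levels $0$, $k$ and $n$, and the fact that all sums are of nonnegative terms, so the rearrangements are justified. I expect the main conceptual step to be the reduction to a finite $W$ via countable additivity. Once that is in place, the rest is the argument of Theorem \ref{thm test 1}, which is the special case $k=0$ with $H^{(0)}_w=1$.
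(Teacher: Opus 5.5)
Your proof is correct and is essentially the paper's argument: the same decomposition of $\bigsqcup_{w\in W}X^{(k)}_w$ into $H^{(k)}_w h^{(n,k)}_{v,w}$ cylinders of measure $p^{(n)}_v$ and the same bound via \eqref{eq meas of towers}. The only difference is at the end, where you fix one finite $W$ of mass $>1/2$, while the paper shows every such set is null and covers $X_B$ by countably many of them.

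Your side remark that singletons would not suffice is wrong, though. The same estimate with $W=\{w\}$ gives $\mu(X^{(k)}_w)=0$ for every $w\in V_k$, and countable additivity then already forces $\mu(X_B)=0$.
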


\begin{proof}
We recall that if $\mu$ is a probability tail invariant measure, then for all $n$,
$$
\mu(X_{B})=\sum_{v\in V_{n}}H_{v}^{(n)}p_{v}^{(n)}=1.
$$
Let $W\subset V_{k}$ be a finite set. Denote by $[W]_{n}$ the disjoint union of cylinder sets of length $n$ whose $k$-th vertex belongs to $W$.
That is,
$$
[W]_{n}=\bigsqcup_{w\in W}\bigsqcup_{v_{0}\in V_{0}}\bigsqcup_{v\in V_{n}}\bigsqcup_{\substack{\overline{e}, s(\overline{e})=v_{0} \\ r(\overline{e})=v, r(e_{k-1}) =w}}[\overline{e}].
$$
Note that $[W]_{n}$ is not defined for $n<k$.

The quantity $H_{w}^{(k)}h_{v,w}^{(n,k)}$ gives the total number of all finite paths passing through $w$ and ending at $v$ at step $n$. Then
\be
\ba
\mu([W]_{n})&=\sum_{v\in V_{n}}\left(\sum_{w\in W}H_{w}^{(k)}h_{v,w}^{(n,k)}\right)p_{v}^{(n)}\\
&=\sum_{v\in V_{n}}\left(\frac{\sum_{w\in W}H_{w}^{(k)}h_{v,w}^{(n,k)}}{H_{v}^{(n)}}\right)H_{v}^{(n)}p_{v}^{(n)}\\
&\leq\sup_{v\in V_{n}}\left(\frac{\sum_{w\in W}H_{w}^{(k)}h_{v,w}^{(n,k)}}{H_{v}^{(n)}}\right)\sum_{v\in V_{n}}H_{v}^{(n)}p_{v}^{(n)}\\
&=\sup_{v\in V_{n}}\frac{\sum_{w\in W}H_{w}^{(k)}h_{v,w}^{(n,k)}}{H_{v}^{(n)}}.
\ea
\ee

As in the proof of Theorem \ref{thm test 1}, we note that, in fact, $[W]_n$ does not depend on $n$, so that $[W]=[W]_{n}$ and
$$
\mu([W])\leq\liminf_{n\to\infty}\left( \sup_{v\in V_{n}}\frac{\sum_{w\in W}H_{w}^{(k)}h_{v,w}^{(n,k)}}{H_{v}^{(n)}}\right)
$$
for all $n$. Finally, we apply \eqref{eq suff cond 2} to get 
$$\mu(X_{B})=\mu\left(\bigcup_{\substack{W\subseteq V_{k}\\ |W|<\infty}}[W]\right)\leq\sum_{\substack{W\subseteq V_{k}\\ |W|<\infty}}\mu([W])=0,
$$
which is a contradiction.
\end{proof}

There is a simple sufficient condition for the nonexistence of probability tail invariant measures that follows from the proved results.

\begin{theorem}\label{thm 3} Let $B = B(F_n)$ be a generalized Bratteli diagram defined by a sequence of incidence matrices $(F_n)$.
Suppose that there exists a sequence of positive numbers $(M_{n})$ with $M_{n}\to\infty$ such that for all $n\geq1$ and vertices $v\in V_{n}$ and $w\in V_{0}$,
\begin{align}
h_{v,w}^{(n)}M_{n}\leq H_{v}^{(n)}.\label{no more than 1/M}
\end{align}
Then the generalized Bratteli diagram $B$ does not admit any probability tail invariant measure.
\end{theorem}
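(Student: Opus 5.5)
Theorem \ref{thm 3} is a direct corollary of Theorem \ref{thm test 1}, so I will verify hypothesis \eqref{eq suff cond 1} and then invoke that theorem. Fix $w\in V_0$. By assumption \eqref{no more than 1/M}, for every $n\ge 1$ and every $v\in V_n$ we have $h^{(n)}_{v,w}/H^{(n)}_v \le 1/M_n$. Since $H^{(n)}_v\ge 1$ for every $v\in V_n$ (every vertex has a nonempty set of incoming finite paths from $V_0$), the ratio is well defined. Because the bound $1/M_n$ does not depend on $v$ or on $w$, it follows that
\[
0\le \sup_{v\in V_n}\frac{h^{(n)}_{v,w}}{H^{(n)}_v}\le \frac{1}{M_n}.
\]

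Since $M_n\to\infty$, the right-hand side tends to $0$. Therefore the supremum actually converges to $0$, and in particular its $\liminf$ equals $0$. This is exactly condition \eqref{eq suff cond 1}, and it holds for every $w\in V_0$. Theorem \ref{thm test 1} then shows that $B$ admits no probability tail invariant measure.

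The argument can also be made self-contained. Assume $\mu$ is a probability tail invariant measure. Repeating the estimate \eqref{eq meas [w]} and using \eqref{eq meas of towers} gives $\mu([w])\le 1/M_n$ for all $n$, hence $\mu([w])=0$ for every $w\in V_0$. Then $\mu(X_B)=\sum_{w\in V_0}\mu([w])=0$, which is a contradiction.

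There is no real obstacle here. The only points to check are that the bound is uniform in $v$, so that taking the supremum is harmless, and that the positivity of $H^{(n)}_v$ makes the ratios meaningful.
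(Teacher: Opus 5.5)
Your proposal is correct and matches the paper's proof: the hypothesis gives $\sup_{v\in V_n} h^{(n)}_{v,w}/H^{(n)}_v \le 1/M_n \to 0$ for each $w\in V_0$, which is condition \eqref{eq suff cond 1}, and Theorem \ref{thm test 1} then rules out a probability tail invariant measure. Your self-contained variant is the paper's first proof of Theorem \ref{thm test 1}, specialized to this uniform bound.
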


\begin{proof}
The proof immediately follows from Theorem \ref{thm test 1}. The hypothesis of the theorem means that 
$$\frac{h_{v,w}^{(n)}}{H_{v}^{(n)}}\leq\frac{1}{M_{n}}\to 0, \quad n\to \infty, 
$$ for all $v\in V_{n}$ and $w\in V_{0}$. Then 
$$\lim_{n\to \infty} \sup_{v\in V_{n}}\frac{h_{v,w}^{(n)}}{H_{v}^{(n)}} =0.$$
\end{proof}

\begin{remark}
\begin{enumerate}[label=(\alph*)]
\item The ratio $\frac{h_{v,w}^{(n)}}{H_{v}^{(n)}}$ is the fraction of paths ending at $v$ and starting at $w$. Relation \eqref{no more than 1/M} means that no single vertex $w$ contributes more than $M_{n}^{-1}$. This condition can be checked for various types of generalized Bratteli 
diagrams. In particular, for some classes of stationary Bratteli diagrams where the entries of $F^n$ grow as powers $\lambda^n$ of the Perron eigenvalue $\lambda$.

\item Suppose that $U_{n}(v)=\{w\in V_{0}:h_{v,w}^{(n)}>0\}$ is the set of vertices from $V_0$ that are connected by a path with a vertex $v\in V_n$.
This set is finite; see Definition \ref{def BD}. Assume that 
there exists a sequence $M_{n}\to\infty$ such that $|U_{n}(v)|\geq M_{n}$. This occurs if $(F_{n})$ are banded matrices. The average value of $h_{v,w}^{(n)}$ over $w\in U_{n}(v)$ is $\dfrac{H_{v}^{(n)}}{|U_{n}(v)|}$. If the paths are \textit{balanced}, that is, 
$$h_{v,w}^{(n)}\leq C\cdot\frac{H_{v}^{(n)}}{|U_{n}(v)|},
$$ then 
$$\frac{h_{v,w}^{(n)}}{H_{v}^{(n)}}\leq\frac{C}{M_{n}}\to 0,
$$ and we can apply Theorem \ref{thm test 1}.
\end{enumerate}
\end{remark}

The following example illustrates Theorem \ref{thm 3}.

\begin{example}\label{ex triangle}
Let $V_{n}=\mathbb{Z}$ for all $n\geq0$. Construct a stationary generalized Bratteli diagram $B= B(F)$, and let $A= F^T$ have the entries
$a_{i,j}$ where $a_{i,i}=a_{i,i+1}=1$ and $a_{i,j}=0$ otherwise. We compute $h_{v,w}^{(n)}=\begin{pmatrix}n\\v-w\end{pmatrix}$ for $0\leq v-w\leq n$. Then 
$$H_{v}^{(n)}=\sum_{w=v-n}^{v}\begin{pmatrix}n\\v-w\end{pmatrix}=\sum_{k=0}^{n}\begin{pmatrix}n\\k\end{pmatrix}=2^{n}.$$
The maximum value of $\begin{pmatrix}n\\k\end{pmatrix}$ occurs if $k=\lfloor\dfrac{n}{2}\rfloor$. So,
$$h_{v,w}^{(n)}\leq\begin{pmatrix}n\\\lfloor n/2\rfloor\end{pmatrix}\approx\frac{2^{n}}{\sqrt{\pi n/2}}$$
Therefore,
$$\frac{h_{v,w}^{(n)}}{H_{v}^{(n)}}\leq\frac{\begin{pmatrix}n\\\lfloor n/2\rfloor\end{pmatrix}}{2^{n}}<c\frac{1}{\sqrt{n}}=:\frac{1}{M_{n}}\to0$$
as $n\to\infty$. By Theorem \ref{thm 3}, this diagram has no probability invariant measure.

\[\cdots\begin{tikzcd}[cramped]
	\bullet & \bullet & \bullet & \bullet & \bullet \\
	\bullet & \bullet & \bullet & \bullet & \bullet \\
	\bullet & \bullet & \bullet & \bullet & \bullet \\
	{} & {} & {} & {} & {}
	\arrow[no head, from=1-1, to=2-1]
	\arrow[no head, from=1-1, to=2-2]
	\arrow[no head, from=1-2, to=2-1]
	\arrow[no head, from=1-2, to=2-2]
	\arrow[no head, from=1-2, to=2-3]
	\arrow[no head, from=1-3, to=2-2]
	\arrow[no head, from=1-3, to=2-3]
	\arrow[no head, from=1-3, to=2-4]
	\arrow[no head, from=1-4, to=2-3]
	\arrow[no head, from=1-4, to=2-4]
	\arrow[no head, from=1-4, to=2-5]
	\arrow[no head, from=1-5, to=2-4]
	\arrow[no head, from=1-5, to=2-5]
	\arrow[no head, from=2-1, to=3-1]
	\arrow[no head, from=2-1, to=3-2]
	\arrow[no head, from=2-2, to=3-1]
	\arrow[no head, from=2-2, to=3-2]
	\arrow[no head, from=2-2, to=3-3]
	\arrow[no head, from=2-3, to=3-2]
	\arrow[no head, from=2-3, to=3-3]
	\arrow[no head, from=2-3, to=3-4]
	\arrow[no head, from=2-4, to=3-3]
	\arrow[no head, from=2-4, to=3-4]
	\arrow[no head, from=2-4, to=3-5]
	\arrow[no head, from=2-5, to=3-4]
	\arrow[no head, from=2-5, to=3-5]
	\arrow["\vdots"{description}, draw=none, from=3-1, to=4-1]
	\arrow["\vdots"{description}, draw=none, from=3-2, to=4-2]
	\arrow["\vdots"{description}, draw=none, from=3-3, to=4-3]
	\arrow["\vdots"{description}, draw=none, from=3-4, to=4-4]
	\arrow["\vdots"{description}, draw=none, from=3-5, to=4-5]
\end{tikzcd}\cdots\]

\end{example}

In the rest of this section, we consider two more approaches for the problem of the nonexistence of probability tail invariant measures. It will be convenient to interpret tail invariance for a measure as a uniformly distributed probability of getting a vertex $v \in V_n$ starting at $V_0$.

For $v\in V_{n}$, suppose that paths terminated at $v$ have the same probability. Then
$$p^{(n)}(v,w):=\frac{h_{v,w}^{(n)}}{H_{v}^{(n)}}
$$
is the probability that a randomly chosen path that ends at $v$ begins at $w\in V_{0}$. Clearly, $\sum_{w\in V_{0}}p^{(n)}(v,w)=1$. Similarly, if $W\subseteq V_{0}$, then $p^{(n)}(v,W)=\sum_{w\in W}p^{(n)}(v,w).$

\begin{theorem} \label{thm W finite}
For a generalized Bratteli diagram $B$,
suppose that for all finite $W\subseteq V_{0}$ the probability 
$p^{(n)}(v,W)$ of originating from $W$ vanishes uniformly as $n\to\infty$, that is:
$$\liminf_{n\to\infty}\left(\sup_{v\in V_{n}}p^{(n)}(v,W)\right)=0.$$
Then the diagram $B$ has no probability tail invariant measures.
\end{theorem}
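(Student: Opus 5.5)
The plan is to argue by contradiction, exactly as in the first proof of Theorem \ref{thm test 1}, with the single vertex $w$ replaced by a finite set $W\subseteq V_0$. Suppose $\mu$ is a tail invariant probability measure with associated vectors $(p^{(n)})$. By \eqref{eq meas of towers}, $\sum_{v\in V_n}H^{(n)}_v p^{(n)}_v=1$ for every $n$. For a finite $W\subseteq V_0$, set $[W]=\bigsqcup_{w\in W}[w]$, where $[w]=\{x\in X_B: s(x_0)=w\}$. Summing the decomposition \eqref{eq [w]} over $w\in W$ expresses $[W]$, for every $n$, as a disjoint union of cylinder sets of length $n$ starting in $W$. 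Hence
\begin{equation*}
\mu([W])=\sum_{v\in V_n}\Bigl(\sum_{w\in W}h^{(n)}_{v,w}\Bigr)p^{(n)}_v=\sum_{v\in V_n}p^{(n)}(v,W)\,H^{(n)}_v p^{(n)}_v .
\end{equation*}

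The key estimate is to bound $p^{(n)}(v,W)$ by its supremum over $v\in V_n$ and then use the normalization. This gives
\begin{equation*}
\mu([W])\le \sup_{v\in V_n}p^{(n)}(v,W)
\end{equation*}
for every $n$. Since the left side does not depend on $n$, we may pass to the liminf, and the hypothesis yields $\mu([W])=0$ for every finite $W$.

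To conclude, exhaust $V_0$ by an increasing sequence of finite sets $W_1\subseteq W_2\subseteq\cdots$ with $\bigcup_j W_j=V_0$. Then $[W_j]\uparrow X_B$, so continuity from below gives $\mu(X_B)=\lim_j\mu([W_j])=0$, contradicting $\mu(X_B)=1$. Alternatively, one can simply take $W=\{w\}$: the hypothesis then reduces to \eqref{eq suff cond 1}, and Theorem \ref{thm test 1} applies directly. This observation makes the statement a formal consequence of Theorem \ref{thm test 1}, and I would mention it as a remark.

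There is essentially no obstacle. The only point needing care is interchanging the sum over $W$ with the decomposition into cylinders, which is harmless because $W$ is finite and all terms are nonnegative, and justifying the passage to the liminf, which holds because the inequality is valid for every $n$.
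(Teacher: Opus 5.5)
Your proof is correct and follows the paper's argument essentially step for step: you decompose $[W]$ into length-$n$ cylinders, bound $\mu([W])\le\sup_{v\in V_n}p^{(n)}(v,W)$ using the normalization $\sum_{v\in V_n}H^{(n)}_v p^{(n)}_v=1$, and conclude via an exhaustion $W_j\uparrow V_0$ and continuity from below. Your side remark is also right: taking $W=\{w\}$ turns the hypothesis into \eqref{eq suff cond 1}, so the statement is in fact a formal consequence of Theorem \ref{thm test 1}.
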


\begin{proof} In the proof, we will use the notation introduced in
Theorem \ref{thm test 1}.
Assuming that $\mu$ is probability, we have $\mu(X_{B})=\sum_{v\in V_{n}}H_{v}^{(n)}p_{v}^{(n)}= 1.$ Let $[w]$ be the set of all paths beginning in $w\in V_0$, and $[W] = \bigsqcup_{w\in W}[w]$. As noted in Theorem \ref{thm test 1}, the set $[W] = [W]_n$, where $[W]_n$ is formed by all cylinder sets of length $n$.
 Then
\be
\ba
\mu([W]_{n})&=\sum_{w\in W}\sum_{v\in V_{n}}h_{v,w}^{(n)}p_{v}^{(n)}\\
&=\sum_{v\in V_{n}}\left(\sum_{w\in W}h_{v,w}^{(n)}\right)p_{v}^{(n)}\\
&=\sum_{v\in V_{n}}\left(\sum_{w\in W}\frac{h_{v,w}^{(n)}}{H_{v}^{(n)}}\right)H_{v}^{(n)}p_{v}^{(n)}\\
&=\sum_{v\in V_{n}}p^{(n)}(v,W)H_{v}^{(n)}p_{v}^{(n)}\\
&\leq\left(\sup_{v\in V_{n}}p^{(n)}(v,W)\right)\sum_{v\in V_{n}}H_{v}^{(n)}p_{v}^{(n)}\\
&\leq\sup_{v\in V_{n}}p^{(n)}(v,W).
\ea
\ee
Since this holds for all $n$, we conclude that
$$\mu([W])\leq\liminf_{n\to\infty}(\sup_{v\in V_{n}}p^{(n)}(v,W)).$$
Let $W_{1}\subseteq W_{2}\subseteq\cdots\uparrow V_{0}$ be an increasing sequence of finite sets whose union is $V_{0}$. Then $\mu(X_{B})=\lim_{k\to\infty}\mu([W_{k}])=0$.
\end{proof}

\begin{remark}
It follows from the proof of Theorem \ref{thm W finite} that the result remains true if we take an increasing exhaustion
$$
W_1 \subset W_2\subset W_3 \subset \cdots,\quad \bigcup_{i=1}^\infty W_i = V_0.
$$
\end{remark}

\begin{example} Let $B$ be as in Example \ref{ex triangle}, that is 
$a_{n,n-1}=a_{n,n+1}=1$. If $W=\{-K,\dots,K\}\subseteq V_{0}$, then
$$p^{(n)}(v,W)\approx \frac{|W|}{\sqrt{2\pi n}}\exp\left(-\frac{v^{2}}{2n}\right)\leq\frac{2k+1}{\sqrt{2\pi n}}\to0.$$

This diagram $B$ has no probability tail invariant measure.

\end{example}

\begin{definition}
We say that a generalized Bratteli diagram has \textit{uniform bounded distortion} if there is some $C\geq1$ where, for all $m\geq0$, there is some $N(m)$ such that for all $n\geq N(m)$ and $u,v\in V_{n}$,
\begin{align}\label{eq distortion}
\frac{(F^{(n,m)})_{v,u'}}{(F^{(n,m)})_{u,u'}}\leq C\frac{H_{v}^{(n)}}{H_{u}^{(n)}},\qquad\forall u'\in V_{m}.
\end{align}
Here, $F^{(n,m)}=F_{n-1}\cdots F_{m}$ 
and $(F^{(n,m)})_{v,u'}$ shows the number of paths between $u'\in V_{m}$ and $v\in V_{n}$.
\end{definition}
In particular, if $m=0$ and $w\in V_{0}$, then \eqref{eq distortion} has the form
\begin{align}\label{eq distortion 2}
\frac{h_{v,w}^{(n)}}{h_{u,w}^{(n)}}\leq C\frac{H_{v}^{(n)}}{H_{u}^{(n)}}\iff \frac{h_{v,w}^{(n)}}{H_{v}^{(n)}}\leq C\frac{h_{u,w}^{(n)}}{H_{u}^{(n)}}, \qquad\forall v,u\in V_{n},\forall w\in V_{0}.
\end{align}
The relations \eqref{eq distortion} and \eqref{eq distortion 2} hold for the pairs of vertices that are connected by at least one finite path.

\begin{theorem}
Let $B$ be a Bratteli diagram with uniform bounded distortion. Suppose  that for every $w\in V_{0}$ there exists a sequence $(v_{n})$, $v_{n}\in V_{n}$ such that 
$$\liminf_{n\to\infty}h_{v_{n},w}^{(n)}(H_{v_{n}}^{(n)})^{-1}=0.$$
Then, $B$ has no probability tail invariant measure.
\end{theorem}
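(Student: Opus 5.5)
The plan is to reduce the statement to Theorem \ref{thm test 1}. The distortion hypothesis will turn the decay of the ratio along the single sequence $(v_n)$ into decay uniformly over the whole level. Fix $w\in V_0$ and apply \eqref{eq distortion 2} with $m=0$. There is a constant $C\geq 1$ and an $N(0)$ such that, for all $n\geq N(0)$ and all $v,u\in V_n$ connected to $w$, we have
$$
\frac{h_{v,w}^{(n)}}{H_{v}^{(n)}}\leq C\,\frac{h_{u,w}^{(n)}}{H_{u}^{(n)}}.
$$
Take $u=v_n$ and then the supremum over $v\in V_n$. This gives
$$
\sup_{v\in V_n}\frac{h_{v,w}^{(n)}}{H_{v}^{(n)}}\leq C\,\frac{h_{v_n,w}^{(n)}}{H_{v_n}^{(n)}},\qquad n\geq N(0).
$$
Vertices $v$ not connected to $w$ contribute ratio $0$, so they do not affect the supremum.

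Next I pass to the $\liminf$ along $n$. Since $C$ does not depend on $n$, the hypothesis $\liminf_n h_{v_n,w}^{(n)}/H_{v_n}^{(n)}=0$ gives \eqref{eq suff cond 1} for this $w$. As $w$ was arbitrary, Theorem \ref{thm test 1} shows that no probability tail invariant measure exists. Alternatively, I can run the estimate \eqref{eq meas [w]} directly: it gives $\mu([w])\leq C\,h_{v_n,w}^{(n)}/H_{v_n}^{(n)}$ for all large $n$, hence $\mu([w])=0$ for every $w$, hence $\mu(X_B)=0$.

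The main obstacle is the case where $v_n$ is not connected to $w$, so that $h_{v_n,w}^{(n)}=0$. The inequality \eqref{eq distortion 2} is only asserted for connected pairs, and with $u=v_n$ it would be vacuous or meaningless. If that happens for infinitely many $n$, the hypothesis is satisfied trivially and the argument above gives nothing. I would therefore read the hypothesis as referring to vertices $v_n$ with $h_{v_n,w}^{(n)}>0$, and note this explicitly. This is the natural reading, since otherwise the condition holds for essentially every generalized Bratteli diagram, and the theorem would be false, for instance for diagrams carrying a probability measure. With that reading the chain of inequalities above applies at every $n\geq N(0)$ on the subsequence realizing the $\liminf$, and the proof is complete.
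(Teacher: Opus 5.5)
Your argument is the paper's own: apply \eqref{eq distortion 2} with $u=v_n$ for $n\ge N(0)$, take the supremum over $v\in V_n$, pass to the $\liminf$, and conclude by Theorem \ref{thm test 1}. Your caveat is also correct, and the paper's proof passes over it: \eqref{eq distortion 2} can be used with $u=v_n$ only when $h^{(n)}_{v_n,w}>0$, and without that requirement the statement is false. For a counterexample, take $V_n=\N$ with each $i\in V_{n+1}$ receiving exactly two edges, both from $i\in V_n$. Distortion holds with $C=1$, the hypothesis holds trivially with $v_n=w+1$, and $p^{(n)}_i=c_i2^{-n}$ with $\sum_i c_i=1$ gives a probability tail invariant measure.
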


\begin{proof}
Fix $w\in V_{0}$. By assumption, for all $n>n(0)$ and $v\in V_{n}$ we can choose $u=v_{n}\in V_{n}$ and apply \eqref{eq distortion 2} Then
$$\frac{h_{v,w}^{(n)}}{H_{v}^{(n)}}\leq C\frac{h_{v_{n},w}^{(n)}}{H_{v_{n}}^{(n)}}$$
So, 
$$0\leq\sup_{v\in V_{n}}\frac{h_{v,w}^{(n)}}{H_{v}^{(n)}}\leq C\frac{h_{v_{n},w}^{(n)}}{H_{v_{n}}^{(n)}}.$$
The right-hand term has a subsequence that goes to 0 as $n\to\infty$, so
$$\liminf_{n\to\infty} \left(\sup_{v\in V_{n}}\frac{h_{v,w}^{(n)}}{H_{v}^{(n)}}\right) =0.$$
Thus, by Theorem \ref{thm test 1}, there is no probability tail invariant measure.
\end{proof}

\section{Spectral Method for Stationary Bratteli Diagrams}\label{sect 4}

In this section, we study a class of stationary Bratteli diagrams $B = B(F)$ that do not admit a probability tail invariant measure. We will begin with a particular example of a diagram 
that contains a countable family of odometer subdiagrams with multiplicities $2, 3, 4, \ldots$ coupled by one edge in each direction between neighboring indices, so 
\be\label{eq m-x F}
F =
\begin{pmatrix}
2&1&0&0&\cdots\\
1&3&1&0&\cdots\\
0&1&4&1&\cdots\\
0&0&1&5&\ddots\\
\vdots&\vdots&\vdots&\ddots&\ddots 
\end{pmatrix}.
\ee
This is a tridiagonal symmetric matrix, that is, $A = A^T$, where $A$ is the transpose of $F$. 
We regard $A$ as an operator acting in $\ell^2(\mathbb N)$ defined by
$$
(Ax)_i=(i+1)x_i +x_{i-1}+ x_{i+1},\qquad i\ge 1,
$$
with the convention $x_0=0$. Then $A$ is an unbounded linear operator 
with domain
$$
D(A)=\left\{
x=(x_i)\in\ell^2(\mathbb N):
\bigl((i+1)x_i\bigr)_{i\ge1}\in\ell^2(\mathbb N)
\right\}.
$$

\[\begin{tikzcd}[cramped]
	\bullet & \bullet & \bullet & \bullet \\
	\bullet & \bullet & \bullet & \bullet \\
	\bullet & \bullet & \bullet & \bullet \\
	{} & {} & {} & {}
	\arrow[curve={height=-6pt}, no head, from=1-1, to=2-1]
	\arrow[curve={height=6pt}, no head, from=1-1, to=2-1]
	\arrow[no head, from=1-1, to=2-2]
	\arrow[no head, from=1-2, to=2-1]
	\arrow[curve={height=-6pt}, no head, from=1-2, to=2-2]
	\arrow[curve={height=6pt}, no head, from=1-2, to=2-2]
	\arrow[no head, from=1-2, to=2-2]
	\arrow[no head, from=1-2, to=2-3]
	\arrow[no head, from=1-3, to=2-2]
	\arrow[curve={height=-6pt}, no head, from=1-3, to=2-3]
	\arrow[curve={height=6pt}, no head, from=1-3, to=2-3]
	\arrow[curve={height=12pt}, no head, from=1-3, to=2-3]
	\arrow[curve={height=-12pt}, no head, from=1-3, to=2-3]
	\arrow[no head, from=1-3, to=2-4]
	\arrow[no head, from=1-4, to=2-3]
	\arrow[curve={height=-6pt}, no head, from=1-4, to=2-4]
	\arrow[curve={height=6pt}, no head, from=1-4, to=2-4]
	\arrow[curve={height=-12pt}, no head, from=1-4, to=2-4]
	\arrow[curve={height=12pt}, no head, from=1-4, to=2-4]
	\arrow[no head, from=1-4, to=2-4]
	\arrow[curve={height=6pt}, no head, from=2-1, to=3-1]
	\arrow[curve={height=-6pt}, no head, from=2-1, to=3-1]
	\arrow[no head, from=2-1, to=3-2]
	\arrow[no head, from=2-2, to=3-1]
	\arrow[curve={height=6pt}, no head, from=2-2, to=3-2]
	\arrow[curve={height=-6pt}, no head, from=2-2, to=3-2]
	\arrow[no head, from=2-2, to=3-2]
	\arrow[no head, from=2-2, to=3-3]
	\arrow[no head, from=2-3, to=3-2]
	\arrow[curve={height=-6pt}, no head, from=2-3, to=3-3]
	\arrow[curve={height=6pt}, no head, from=2-3, to=3-3]
	\arrow[curve={height=12pt}, no head, from=2-3, to=3-3]
	\arrow[curve={height=-12pt}, no head, from=2-3, to=3-3]
	\arrow[no head, from=2-3, to=3-4]
	\arrow[no head, from=2-4, to=3-3]
	\arrow[curve={height=6pt}, no head, from=2-4, to=3-4]
	\arrow[curve={height=-6pt}, no head, from=2-4, to=3-4]
	\arrow[curve={height=-12pt}, no head, from=2-4, to=3-4]
	\arrow[no head, from=2-4, to=3-4]
	\arrow[curve={height=12pt}, no head, from=2-4, to=3-4]
	\arrow["\vdots"{description}, draw=none, from=3-1, to=4-1]
	\arrow["\vdots"{description}, draw=none, from=3-2, to=4-2]
	\arrow["\vdots"{description}, draw=none, from=3-3, to=4-3]
	\arrow["\vdots"{description}, draw=none, from=3-4, to=4-4]
\end{tikzcd}\cdots\]

\begin{theorem}\label{thm A s-a}
The operator $A: \ell^2(\mathbb N) \to \ell^2(\mathbb N)$ is self-adjoint, has a compact resolvent, and consequently has a purely discrete spectrum and an orthonormal basis of eigenvectors.
\end{theorem}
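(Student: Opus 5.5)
We will write $A = D + J$, where $D$ is the diagonal multiplication operator $(Dx)_i = (i+1)x_i$ with domain $D(A)$ as given, and $J$ is the off-diagonal part $(Jx)_i = x_{i-1} + x_{i+1}$ (with $x_0 = 0$). The plan is to treat $A$ as a bounded symmetric perturbation of a self-adjoint operator with compact resolvent, and then invoke standard perturbation results.

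\textbf{Step 1: the diagonal part.} $D$ is a multiplication operator by the real sequence $(i+1)_{i\ge 1}$ on its maximal domain, so it is self-adjoint. Since $(i+1)\to\infty$, its resolvent $(D-z)^{-1}$ is diagonal with entries $((i+1)-z)^{-1}\to 0$. It is therefore a norm limit of finite-rank operators, hence compact. Moreover, $D\ge 2$, so $D^{-1}$ exists and is compact.

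\textbf{Step 2: the off-diagonal part.} $J = S + S^*$, where $S$ is the unilateral shift, so $J$ is bounded and self-adjoint with $\|J\|\le 2$. By the Kato--Rellich theorem (a bounded symmetric perturbation has relative bound $0$), $A = D + J$ is self-adjoint on $D(D) = D(A)$. Alternatively, one can check directly that $A^* = A$, because adding a bounded everywhere-defined self-adjoint operator preserves adjoints: $(D+J)^* = D^* + J^*$.

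\textbf{Step 3: compact resolvent.} For real $z$ with $z < 2 - \|J\| = 0$, say $z = -1$, we have $A - z \ge D - z - \|J\| \ge 1$, so $A - z$ is invertible. We then write
\[
(A-z)^{-1} = (D-z)^{-1}\bigl(I + J(D-z)^{-1}\bigr)^{-1}.
\]
Here the middle factor is bounded. This holds because $\|J(D-z)^{-1}\|\le 2/(2-z) < 1$ once $z<0$, so a Neumann series applies. The product of a compact operator and a bounded operator is compact. By the resolvent identity, compactness at one point of the resolvent set implies compactness at all points.

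\textbf{Step 4: consequences.} A self-adjoint operator with compact resolvent has spectrum consisting of isolated real eigenvalues of finite multiplicity accumulating only at $\infty$. Applying the spectral theorem for the compact self-adjoint operator $(A+I)^{-1}$ gives an orthonormal basis of eigenvectors of $(A+I)^{-1}$. These are exactly eigenvectors of $A$, since $(A+I)^{-1}$ is injective, so $0$ is not an eigenvalue.

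The main point requiring care is the domain bookkeeping in Step 2: we must confirm that the formal tridiagonal matrix, acting on $D(A)$, really coincides with $D+J$. This holds since $J$ maps all of $\ell^2$ into $\ell^2$. The remaining steps are routine.
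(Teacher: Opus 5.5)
Your argument is correct. For self-adjointness it matches the paper: split off the bounded part $S+S^*$, which has norm at most $2$, and apply Kato--Rellich. Your treatment of the resolvent, however, takes a genuinely different route.

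The paper's route is hands-on:
\begin{enumerate}[label=(\roman*)]
\item It proves the explicit form bound $\langle Ax,x\rangle\ge |x_1|^2+\sum_{i\ge 2}(i-1)|x_i|^2\ge \|x\|^2$ and deduces $\|(A+I)x\|\ge 2\|x\|$.
\item It uses self-adjointness to conclude $\operatorname{Ran}(A+I)=\ell^2(\mathbb N)$.
\item It then uses the same quadratic form to get a tail estimate $\|(I-P_N)(A+I)^{-1}\|=O(N^{-1/2})$, which gives compactness by direct finite-rank approximation.
\end{enumerate}

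You instead factor $A+I=\bigl(I+J(D+I)^{-1}\bigr)(D+I)$ with $\|J(D+I)^{-1}\|\le 2/3$. The Neumann series then gives bijectivity of $A+I$ from $D(A)$ onto $\ell^2(\mathbb N)$ without invoking self-adjointness. Compactness is inherited from the diagonal resolvent, as a compact operator times a bounded one.

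What your route buys:
\begin{enumerate}[label=(\roman*)]
\item It is shorter.
\item It avoids entrywise bookkeeping of the quadratic form. This is where the paper's Step 3 is slightly loose: the inequality $\langle (A+I)x,x\rangle\ge\sum_i (i+1)|x_i|^2$ it uses does not follow from Step 2. In fact it fails, since the spectrum of $S+S^*$ is $[-2,2]$. The weaker bound $\langle (A+I)x,x\rangle\ge\sum_{i\ge 2} i|x_i|^2$, which Step 2 does give, is enough for the conclusion.
\item It is structural. It shows that any diagonal operator with entries tending to $\infty$, plus a bounded symmetric perturbation, has compact resolvent. That is a natural sufficient condition for the eigenbasis hypothesis in the paper's general spectral theorem at the end of Section~4.
\end{enumerate}

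What the paper's route buys in exchange is information specific to this matrix. It gives $A\ge I$, so all eigenvalues are at least $1$, whereas you get only $A\ge 0$. It also gives an explicit rate for the finite-rank approximation.
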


\begin{proof}
\textit{Step 1}. Show that $A$ is self-adjoint. Write
$A=D + K$ where $D$ is the diagonal operator $(Dx)_i=(i+1)x_i$
and $(Kx)_i=x_{i-1}+x_{i+1}.$
The diagonal operator $D$ is self-adjoint on the domain 
$D(D)=D(A)$.
The operator $K$ is bounded and self-adjoint on $\ell^2(\mathbb N)$, because $K = S + S^*$, where $S$ is  the unilateral shift on $\ell^2(\mathbb N)$. In fact, $\|K\|\le 2$ since 
$$
\|Kx\| \ \le 2\|x\|.
$$
Applying the Kato-Rellich theorem (the bounded perturbation theorem), we conclude that $A$ is self-adjoint on the maximal domain $D(A)$.
\\

\textit{Step 2}. We  will show that $(A + I)^{-1}$ is bounded. 
For simplicity, we will deal with the real Hilbert space $\ell^2(\mathbb N)$. 
For $x \in D(A)$, compute 
$$
\langle Ax,x\rangle = \sum_{i\ge1}(i+1)|x_i|^2 + \sum_{i\ge 1}x_i x_{i+1} + \sum_{i\ge 1}x_{i-1} {x_{i}} 
= \sum_{i\ge1}(i+1)|x_i|^2 + 2 \sum_{i\ge1}x_i x_{i+1}.
$$
Using
$$
2(x_i x_{i+1}) \ge -|x_i|^2-|x_{i+1}|^2,
$$
we obtain 
\begin{align*}
\langle Ax,x\rangle
&\ge
\sum_{i\ge1}(i+1)|x_i|^2 - \sum_{i\ge1}|x_i|^2 - \sum_{i\ge1}|x_{i+1}|^2\\
&= |x_1|^2+ \sum_{i \ge 2}(i-1) |x_i|^2\\
&\ge \|x\|^2.
\end{align*}
Then
$$
2|| x||^2 \le |\langle (A+I)x, x \rangle| \le ||(A+I)x||\cdot ||x ||
$$
and 
$$
\Vert(A+I)x\Vert \ge 2\Vert x\Vert \quad \text{for all } x \in D(A).
$$
This directly implies that $\ker(A+I) = \{0\}$ (so $A+I$ is injective) and
the range $\operatorname{Ran}(A+I)$ is closed.

Since $A$ (and thus $A+I$) is self-adjoint, we conclude
$$
\operatorname{Ran}(A+I) = \overline{\operatorname{Ran}(A+I)} = \Big(\ker(A+I)^*\Big)^\perp = \Big(\ker(A+I)\Big)^\perp = \{0\}^\perp = \ell^2(\mathbb{N}). 
$$
Now, since $A+I : D(A) \to \ell^2(\mathbb{N})$ is a bijection, its inverse $(A+I)^{-1}$ exists. Setting $y = (A+I)x$, we have $x = (A+I)^{-1}y$. 
From the proved inequality, it follows 
$$
\Vert y\Vert  = \Vert (A+I)x\Vert  \ge 2\Vert x\Vert  = 2\Vert (A+I)^{-1}y\Vert 
$$
Rearranging yields 
$$\Vert(A+I)^{-1}y\Vert \le \frac{1}{2}\Vert y\Vert  \quad \implies \quad \Vert (A+I)^{-1}\Vert \le \frac{1}{2}.
$$
\\

\textit{Step 3.} We will show that $(A+I)^{-1}$ can be approximated in the operator norm by a sequence of finite-rank operators $P_N(A+I)^{-1}$. 

Let $P_N$ denote the orthogonal projection onto
$E_N=\operatorname{span}\{e_1,\ldots,e_N\}$, so that 
$$
\Vert(I - P_N)x\Vert^2 = \sum_{i > N} \vert x_i\vert^2.
$$
It follows from Step 2 that
$$
\langle Ax, x \rangle \ge \vert x_1\vert^2 + \sum_{i=2}^\infty (i-1)\vert x_i\vert^2 
$$
and therefore
$$
 \langle (A+I)x, x \rangle \ge 2\vert x_1\vert^2 + \sum_{i=2}^\infty i\vert x_i\vert^2.
 $$
For $y \in \ell^2(\mathbb{N})$ and $x = (A+I)^{-1}y$, we established the bound $\Vert x\Vert \le \frac{1}{2}\Vert y\Vert$. Using the Cauchy-Schwarz inequality, we obtain
\be\label{eq bound A+I}
\langle (A+I)x, x \rangle \le \Vert y\Vert \Vert x\Vert  \le \frac{1}{2}\Vert y\Vert^2. 
\ee
On the other hand,
$$
\langle (A+I)x, x \rangle  \ge \sum_{i=1}^\infty (i+1)\vert x_i\vert^2  
\ge \sum_{i > N} (i+1)\vert x_i\vert ^2 \ge  (N+2) \sum_{i > N} \vert x_i\vert^2.
$$
Combining with \eqref{eq bound A+I}, we have 
$$
(N+2) \sum_{i > N} \vert x_i\vert^2 \le \frac{1}{2}\Vert y\Vert^2 
\implies \sum_{i > N} \vert x_i\vert^2 \le \frac{1}{2(N+2)}\Vert y\Vert^2
$$
or 
$$
\Vert(I - P_N)(A+I)^{-1}y\Vert  \le \frac{1}{\sqrt{2(N+2)}}\Vert y\Vert.
$$
It follows that
$$
\lim_{N \to \infty} \Vert(A+I)^{-1} - P_N(A+I)^{-1}\Vert = 
\lim_{N \to \infty} \Vert(I - P_N)(A+I)^{-1}\Vert = 0,
$$
so that $(A+I)^{-1}$ is compact.
\\

\textit{Step 4.} It has been proved in the previous steps that $(A+I)^{-!}$ is an injective self-adjoint and compact operator. By the spectral theorem, there exists an orthonormal basis $\{u_j\}_{j \ge 1}$ of $\ell^2(\mathbb{N})$ such that:
$$
(A+I)^{-1} u_j = r_j u_j \quad \text{with } r_j \neq 0
$$
and $\lim_{j \to \infty} r_j = 0$. Since $0 < (A+I)^{-1} \le \frac{1}{2}I$, all eigenvalues are in the interval $(0, \frac{1}{2}]$. Then 
$$
A u_j = \lambda_j u_j, \quad \ \text{where } \lambda_j = 
\frac{1}{r_j} - 1.
$$
Moreover, $\lambda_j > 1$ and $\lambda_j \to \infty$ as $j \to \infty$.
\end{proof}

The next question in our study of the Bratteli diagram $B(F)$ is to find a lower bound for the growth of heights of the Kakutani-Rokhlin towers. 
In other words, we need a uniform lower bound for the set $\{H_j^{(n)}: j \in \N\}$.


\begin{lemma}\label{lem bound for heights} For $H_j^{(n)} = \sum_{i \in V_0} (F^n)_{j,i}$, we have
$$
\min_{j\in V_n} H_j^{(n)} \ge \left(\frac{n}{4}\right)^{n/2}.
$$
\end{lemma}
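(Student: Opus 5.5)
We want a lower bound on $H_j^{(n)}$, the number of paths of length $n$ from $V_0$ to vertex $j$ of the diagram with incidence matrix $F$ as in \eqref{eq m-x F}. Vertices are indexed by $\N$, and vertex $i$ carries a loop of multiplicity $i+1$. The plan is to exhibit, for each terminal vertex $j$, an explicit family of paths that is large enough. The key observation is that a path ending at $j$ can first move to a vertex of large index, where the diagonal multiplicities are big, and then return to $j$. The steps between neighboring indices cost a factor $1$ each, while each step spent at vertex $i$ contributes a factor $i+1$.

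Concretely, by \eqref{eq2: FH=H} and stationarity, $H_j^{(n)} = \sum_{i} (F^n)_{j,i} \ge (F^n)_{j,i}$ for every $i$. Counting reversed paths (which is legitimate because $F$ is symmetric), we only need a lower bound for the number of walks of length $n$ in the weighted graph on $\N$. Each step either stays at $i$ with weight $i+1$ or moves to $i\pm1$ with weight $1$. We consider walks that start at $j$, climb for $k$ steps to $j+k$, stay there for $n-k$ steps, and end at the vertex $j+k$. All such walks are genuine paths of length $n$ terminating at $j$ in the diagram. Their count is at least $(j+k+1)^{n-k} \ge (k+1)^{n-k}$. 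Taking $k=\lfloor n/2\rfloor$ gives
\[
H_j^{(n)} \ \ge\ \Bigl(\lfloor n/2\rfloor+1\Bigr)^{\lceil n/2\rceil} \ \ge\ \Bigl(\frac n2\Bigr)^{n/2} \ \ge\ \Bigl(\frac n4\Bigr)^{n/2},
\]
uniformly in $j$. This is the desired estimate, and it even has room to spare; the stated $n/4$ presumably comes from a cruder rounding of $k$.

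The only point needing care is the orientation convention. We must check that $(F^n)_{j,i}$ really counts paths from $i\in V_0$ to $j\in V_n$, and that the climb-then-stay walk read backwards is such a path. Since $F=F^T$, both orientations give the same count, so no issue arises. We must also handle small $n$ and the rounding of $\lfloor n/2\rfloor$. For $n=1$ the bound reads $H_j^{(1)}\ge 1/2$, which is trivial. For general $n$ we use $\lfloor n/2\rfloor+1\ge n/2$ and $\lceil n/2\rceil\ge n/2$ together with the fact that the base is at least $1$. I expect no real obstacle; the main work is simply choosing the right family of paths, namely the climb to a high-multiplicity vertex followed by loops there.
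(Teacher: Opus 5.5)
Your proof is correct and follows the same idea as the paper: exhibit an explicit family of paths that reaches a vertex of large index and loops there, so that the diagonal multiplicities $i+1$ supply the growth. The paper bounds $H_j^{(n)}$ by the diagonal entry $(A^n)_{jj}$, so its path has to go up and come back, costing $2m$ steps with $m=\lfloor n/4\rfloor$; you use the off-diagonal entry $(F^n)_{j,j+k}$ with a one-way descent, which is why you get the sharper bound $(n/2)^{n/2}$.
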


\begin{proof}
It is more convenient to work with the matrix $A = F^T$ because $F$ is symmetric. 

Clearly, $H_j^{(n)} = \sum_{i=1}^\infty (A^n)_{ij} \ge (A^n)_{jj}$.
Fix $m \le n/2$ and consider the length-$n$ path that begins at $j \in V_0$ and ends at $j \in V_n$:
$$
j\to j+1\to\cdots\to j+m
\to\underbrace{j+m\to\cdots\to j+m}_{n-2m\text{ steps}}
\to j+m-1\to\cdots\to j.
$$
The structure of the diagram shows that there are single paths from
$j \in V_0$ to $j+m \in V_m$ and from $j+m  \in V_{n-m}$ to $j\in V_n$.
Also, there are $(j+m+1)^{n-2m}$ finite paths connecting $j+m  \in V_{m}$ and $j+m  \in V_{n-m}$. This means that 
$$
H_j^{(n)}\ge(A^n)_{jj}\ge(j+m+1)^{n-2m}\ge(m+1)^{n-2m}.
$$
Take $m=\lfloor n/4\rfloor$. Then $m+1\ge n/4$ and $n-2m\ge n/2$, giving the claimed bound.
\end{proof}

\begin{lemma}\label{lem lambda bound}
For a fixed $\lambda > 1$, 
$$
\frac{\lambda^n}{(n/4)^{n/2}}\longrightarrow 0, \quad n \to \infty. 
$$
\end{lemma}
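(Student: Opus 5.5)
The plan is to take logarithms and show that the logarithm of the ratio tends to $-\infty$. Write
$$
\log\frac{\lambda^n}{(n/4)^{n/2}} = n\log\lambda - \frac{n}{2}\log\frac{n}{4} = n\Bigl(\log\lambda - \tfrac12\log\tfrac{n}{4}\Bigr).
$$
Since $\lambda>1$ is fixed, $\log\lambda$ is a fixed positive constant, while $\frac12\log\frac n4\to\infty$.

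To make this quantitative, choose $N$ with $\frac12\log\frac N4 \ge \log\lambda + 1$, that is, $N \ge 4e^{2(\log\lambda+1)} = 4e^2\lambda^2$. For $n\ge N$ the bracket is at most $-1$, so the logarithm above is at most $-n$. Hence
$$
0<\frac{\lambda^n}{(n/4)^{n/2}}\le e^{-n}\to 0 .
$$
Equivalently, one can write the ratio as $\bigl(\lambda^2\cdot\frac{4}{n}\bigr)^{n/2}$ and observe that the base is at most $e^{-2}$ once $n\ge 4e^2\lambda^2$.

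There is no real obstacle here; the only point needing care is that $\lambda$ is fixed while $n\to\infty$. The bound is not uniform in $\lambda$, and this is exactly why the spectral theorem stated in the introduction requires $\lambda_k^n/M_n\to0$ only for each fixed $k$. In the intended application, $M_n\ge (n/4)^{n/2}$ by Lemma~\ref{lem bound for heights}, and the $\lambda_j>1$ are the eigenvalues from Theorem~\ref{thm A s-a}. For eigenvalues with $|\lambda|\le 1$ the statement is trivial anyway, since $(n/4)^{n/2}\to\infty$.
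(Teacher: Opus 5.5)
Your proof is correct and follows the same route as the paper, which takes logarithms and observes that $n\log\lambda - \frac{n}{2}\log\frac{n}{4} \to -\infty$. Your explicit threshold $n \ge 4e^2\lambda^2$, which gives the bound $e^{-n}$, is a valid quantitative refinement of that same argument.
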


\begin{proof}
The result follows immediately from the relation
$$
\log\left(\frac{\lambda^n}{(n/4)^{n/2}}\right)
= n\log\lambda-\frac n2\log\frac n4\to-\infty, \quad n\to \infty.
$$
\end{proof}

We are ready to prove the main result of this section.

\begin{theorem}\label{thm no prob meas}
Let $B = B(F)$ be a stationary generalized Bratteli diagram defined by the symmetric matrix $F$; see \eqref{eq m-x F}. Then there is no tail invariant probability measure on the path space $X_B$. 
\end{theorem}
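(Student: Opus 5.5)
We argue by contradiction, following the spectral strategy described in the introduction. Suppose $\mu$ is a tail invariant probability measure on $X_B$, and let $(p^{(n)})$ be the associated vectors from \eqref{eq def p^(n)}. By Theorem \ref{thm inv measures}, $p^{(n)} = A p^{(n+1)}$, and iterating gives $p^{(0)} = A^n p^{(n)}$ for every $n$. The normalization \eqref{eq meas of towers} gives $\sum_v H_v^{(n)} p^{(n)}_v = 1$. By Lemma \ref{lem bound for heights}, $M_n := \min_v H^{(n)}_v \ge (n/4)^{n/2}$, so
\[
\|p^{(n)}\|_1 \le 1/M_n,
\]
and hence also $\|p^{(n)}\|_2 \le \|p^{(n)}\|_1 \le 1/M_n$. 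In particular every $p^{(n)}$ lies in $\ell^2(\mathbb N)$. Since $p^{(0)}_w = \mu([w])$ and $\sum_w p^{(0)}_w = 1$, the vector $p^{(0)}$ is nonzero.

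Next take the orthonormal eigenbasis $\{u_k\}$, with $Au_k=\lambda_k u_k$, supplied by Theorem \ref{thm A s-a}, and pair the recursion with $u_k$. Formally,
\[
\langle p^{(0)},u_k\rangle=\langle A^n p^{(n)},u_k\rangle=\langle p^{(n)},A^n u_k\rangle=\lambda_k^n\langle p^{(n)},u_k\rangle,
\]
so by Cauchy--Schwarz
\[
|\langle p^{(0)},u_k\rangle|\le \lambda_k^n\|p^{(n)}\|_2\le \lambda_k^n/M_n.
\]
For fixed $k$, the right-hand side tends to $0$ as $n\to\infty$ by Lemma \ref{lem lambda bound}, since $\lambda_k>1$. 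Thus every Fourier coefficient of $p^{(0)}$ vanishes, and completeness of $\{u_k\}$ forces $p^{(0)}=0$. This contradicts $\sum_w p^{(0)}_w=1$.

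The main obstacle is justifying the adjoint step $\langle A^n p^{(n)},u_k\rangle=\langle p^{(n)},A^nu_k\rangle$. The recursion $p^{(n)}=Ap^{(n+1)}$ holds entrywise, with finite sums because $A$ is tridiagonal, but it is not yet clear that $p^{(n+1)}\in D(A)$. I would avoid domain issues by working one step at a time with the formal matrix action. For $x\in\ell^2$ and $u\in D(A)$, use the identity $\sum_i (Ax)_i u_i=\sum_i x_i (Au)_i$. This identity follows from the tridiagonal structure and a summation-by-parts argument, provided the diagonal term $\sum_i (i+1)x_iu_i$ converges absolutely. That convergence holds because $((i+1)u_i)\in\ell^2$ and $x\in\ell^2$; the off-diagonal terms are bounded by $\|K\|\le 2$.

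To apply this repeatedly, I also need $A^n u_k \in D(A)$ at each stage. This is automatic, since $A^n u_k=\lambda_k^n u_k$. Applying the one-step identity $n$ times with $x = p^{(j)}$, each of which lies in $\ell^2$, yields
\[
\langle p^{(0)},u_k\rangle=\lambda_k^n\langle p^{(n)},u_k\rangle,
\]
and the argument above concludes the proof.
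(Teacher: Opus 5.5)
Your proposal is correct and follows the paper's proof essentially step for step. You pair $p^{(0)}=A^n p^{(n)}$ with the eigenbasis from Theorem~\ref{thm A s-a}, bound $|\langle p^{(0)},\xi^{(k)}\rangle|$ by a constant times $\lambda_k^n/\min_j H^{(n)}_j$ using Lemma~\ref{lem bound for heights}, and then let $n\to\infty$ via Lemma~\ref{lem lambda bound} to kill every Fourier coefficient. The only difference is bookkeeping: the paper moves $A^n$ onto the eigenvector using absolute convergence of $\sum_{i,j}|\xi_i|(A^n)_{ij}p^{(n)}_j\le\|\xi\|_\infty\sum_j H^{(n)}_jp^{(n)}_j=\|\xi\|_\infty$ together with an $\ell^\infty$--$\ell^1$ estimate, whereas you use $p^{(j)}\in\ell^2$, $u_k\in D(A)$, a one-step summation by parts and Cauchy--Schwarz, which is equally valid.
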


\begin{proof}
Assume that such a measure $\mu$ exists, and let $(p^{(n)})$ be the corresponding sequence of vectors assigned to the vertices of the diagram and satisfying the relation $p^{(n)}=Ap^{(n+1)}\ (A = F)$. Then
$p^{(0)}=A^np^{(n)}$ and the following holds:
\be\label{eq =1}
\sum_{i=1}^\infty H_i^{(n)}p_i^{(n)}=1, \quad \sum_{i=1}^\infty p_i^{(0)}=1, 
\qquad(n\ge 0)
\ee
Obviously, $p^{(0)}_i \le 1$ for all $i$ and $p^{(0)}\in\ell^2(\N)$. 

As was proved in Theorem \ref{thm A s-a}, there exists an orthonormal basis $(\xi^{(k)})$ formed by eigenvectors of $A$, $A\xi^{(k)} = \lambda_k \xi^{(k)}$ where $\lambda_k \to \infty$. Fix $k$ and abbreviate 
$\xi=\xi^{(k)}$, $\lambda=\lambda_k$. 

We first show the convergence of the double sum
$$
\begin{aligned}
\sum_{i,j}|\xi_i|(A^n)_{ij}p_j^{(n)}
&\le \|\xi\|_\infty\sum_j\left(\sum_i(A^n)_{ij}\right)p_j^{(n)}\\
&=\|\xi\|_\infty\sum_j H_j^{(n)}p_j^{(n)}\\
&=\|\xi\|_\infty<\infty.
\end{aligned}
$$
Using \eqref{eq =1}, the symmetry of $A^n$, and $A^n\xi=\lambda^n\xi$, we obtain
\be\label{eq inner}
\begin{aligned}
\langle\xi,p^{(0)}\rangle
&=\sum_{i,j}\xi_i(A^n)_{ij}p_j^{(n)}\\
&=\sum_j\left(\sum_i(A^n)_{ij}\xi_i\right)p_j^{(n)}\\
&=\lambda^n\sum_j\xi_jp_j^{(n)}.
\end{aligned}
\ee
Denoting $q_j^{(n)}:= H_j^{(n)}p_j^{(n)}$, we have a probability vector
$q^{(n)}$. Using Lemma \ref{lem bound for heights}, we write
\be\label{eq abs val}
\begin{aligned}
\left|\sum_j\xi_jp_j^{(n)}\right|
&=\left|\sum_j\xi_j\frac{q_j^{(n)}}{H_j^{(n)}}\right|\\
&\le \|\xi\|_\infty\sum_j\frac{q_j^{(n)}}{H_j^{(n)}}\\
&\le \frac{\|\xi\|_\infty}{\min_j H_j^{(n)}}\\
&\le \|\xi\|_\infty\left(\frac n4\right)^{-n/2}.
\end{aligned}
\ee
Combining \eqref{eq inner} and \eqref{eq abs val}, we deduce
$$
|\langle\xi,p^{(0)}\rangle|
\le \|\xi\|_\infty\frac{\lambda^n}{(n/4)^{n/2}}.
$$
Since $k$ is fixed, $\lambda=\lambda_k$ is a fixed finite number, 
Lemma \ref{lem lambda bound} gives
$$
\frac{\lambda_k^n}{(n/4)^{n/2}}\to 0,\quad n\to \infty.
$$
Therefore, for every $k$
$$
\langle\xi^{(k)},p^{(0)}\rangle= 0.
$$
Hence $p^{(0)}$ must be a zero vector, a contradiction.

\end{proof}

Analyzing the proof of Theorem \ref{thm no prob meas}, we see that this result can be obtained for a class of stationary Bratteli diagrams that satisfy the following conditions.

\begin{theorem}
Let $B = B(F)$ be a stationary generalized Bratteli diagram, with $A=F^T$ realized as a self-adjoint operator acting on $\ell^2(V_0)$. Assume that $A$ has an orthonormal basis $\{\xi^{(k)}\}$ of eigenvectors with finite eigenvalues $\lambda_k$. Define $M_n := \inf_{v\in V_n} H_v^{(n)}$. If 
for every fixed $k$,
\be\label{eq growth cond}
\frac{\lambda_k^m}{M_n} \to 0, \quad n\to \infty, 
\ee
then $B$ admits no tail invariant probability measure.
\end{theorem}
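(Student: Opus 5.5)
The plan is to repeat the argument of Theorem \ref{thm no prob meas} verbatim, replacing the explicit bound of Lemma \ref{lem bound for heights} by $M_n$ and Lemma \ref{lem lambda bound} by hypothesis \eqref{eq growth cond}. (I read the exponent $m$ in \eqref{eq growth cond} as $n$.)

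Suppose $\mu$ is a tail invariant probability measure with vectors $(p^{(n)})$. By Theorem \ref{thm inv measures}, $p^{(0)} = A^n p^{(n)}$ for every $n$. By \eqref{eq meas of towers}, $\sum_v H^{(n)}_v p^{(n)}_v = 1$, and in particular $\sum_w p^{(0)}_w = 1$. So $p^{(0)}\in\ell^1(V_0)\subset\ell^2(V_0)$ is a nonzero vector.

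Fix $k$ and write $\xi=\xi^{(k)}$, $\lambda=\lambda_k$. Since $\|\xi\|_2=1$, we have $\|\xi\|_\infty\le 1$. Because all entries are nonnegative, the double sum satisfies
\[
\sum_{i,j}|\xi_i|(A^n)_{ij}p^{(n)}_j\le \sum_j H^{(n)}_j p^{(n)}_j = 1.
\]
Here we use that the column sums of $A^n=(F^n)^T$ are exactly $H^{(n)}_j$. Absolute convergence allows Fubini, so
\[
\langle \xi,p^{(0)}\rangle=\sum_j\Big(\sum_i (A^n)_{ij}\xi_i\Big)p^{(n)}_j .
\]

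The main obstacle is the step identifying the inner sum with $\lambda^n\xi_j$. In the general setting $A$ need not be symmetric as a matrix, and $\xi$ is an eigenvector of the operator realization on $\ell^2$, not a priori of the formal matrix product. I would resolve it as follows.
\begin{itemize}
\item Assume, as in the example, that the self-adjoint realization acts on finitely supported vectors by the matrix. Then self-adjointness forces $a_{ij}=\langle e_i,Ae_j\rangle=\langle Ae_i,e_j\rangle=a_{ji}$, so the matrix is symmetric.
\item For $\xi\in D(A^n)$, compute $\langle A^n e_j,\xi\rangle=\langle e_j,A^n\xi\rangle=\lambda^n\xi_j$, where $A^ne_j$ is the finitely supported column $((A^n)_{ij})_i$.
\item This justifies
\[
\sum_i (A^n)_{ij}\xi_i=\lambda^n\xi_j .
\]
\end{itemize}
If that symmetry assumption is not available, I would state it as part of the meaning of ``realized as a self-adjoint operator''.

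Then $\langle\xi,p^{(0)}\rangle=\lambda^n\sum_j\xi_jp^{(n)}_j$. Writing $q^{(n)}_j=H^{(n)}_jp^{(n)}_j$, which is a probability vector, we get
\[
\Big|\sum_j\xi_jp^{(n)}_j\Big|\le\|\xi\|_\infty\sum_j q^{(n)}_j/H^{(n)}_j\le \|\xi\|_\infty/M_n .
\]
Hence $|\langle\xi^{(k)},p^{(0)}\rangle|\le |\lambda_k|^n/M_n\to0$ by \eqref{eq growth cond}, applied with $|\lambda_k|$ in case some $\lambda_k$ is negative. So every coefficient of $p^{(0)}$ in the orthonormal basis $\{\xi^{(k)}\}$ vanishes. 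Completeness gives $p^{(0)}=0$, which contradicts $\sum_w p^{(0)}_w=1$.
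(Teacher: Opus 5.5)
Your proposal is correct and follows the paper's argument: the paper reruns the proof of Theorem \ref{thm no prob meas} with $(n/4)^{n/2}$ replaced by $M_n$, pairs $p^{(0)}=A^np^{(n)}$ against each eigenvector, and concludes from completeness. The exponent $m$ in \eqref{eq growth cond} is indeed a typo for $n$, and your two extra points are useful. First, self-adjointness on finitely supported vectors forces $F$ to be symmetric. Second, the identity $\langle A^n e_j,\xi\rangle=\langle e_j,A^n\xi\rangle$ matches the formal matrix action with the operator action. Together they make explicit the symmetry step that the paper uses without comment in the general setting.
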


Indeed, the proof of Theorem \ref{thm no prob meas} gives the estimate
$$
|\langle \xi^{(k)}, p^{(0)} \rangle \le ||\xi||_\infty \frac{\lambda_k^m}{M_n}.
$$
Since $p^{(0)}\in \ell^2$, we obtain that $p^{(0)} = 0.$

This formulation makes the real mechanism transparent: if the uniform tower growth is faster than every individual spectral exponential, then the diagram has no tail invariant probability measure.

A clean growth condition that implies the criterion in \eqref{eq growth cond} is
$$
\frac{\log M_n}{n} \to \infty, \quad n \to \infty.
$$

\textbf{Acknowledgments.} The authors thank our colleagues and coauthors P. Jorgensen, O. Karpel, J. Kwiatkowski, T. Raszeja, and S. Sanadhya for numerous discussions during the development of this project. Special thanks go to the members of the Operator Theory seminar at the University of Iowa, where the results of this paper were presented. 


\bibliographystyle{alpha}
\bibliography{ReferencesProbMeas}

\end{document}